\theoremstyle{plain}
\newtheorem{theorem}{Theorem}
\newtheorem{corollary}{Corollary}
\newtheorem {lemma}{Lemma}
\newtheorem{proposition}{Proposition}
\newcommand{\Aa}{A^{U^2D}}
\newcommand{\Bb}{B^{U^2D}}
\newcommand{\A}{\mathcal{A}^{U^2D}}
\newcommand{\B}{\mathcal{B}^{U^2D}}
\numberwithin{equation}{section}
\begin{document}

\title[Longest subsequence of repeated up/down patterns  ] {Longest subsequence for certain repeated up/down patterns in random permutations avoiding a pattern of length three}

\author{Ross G. Pinsky}

%\noindent  pinsky@math.technion.ac.il\ \ \ \ tel: 972-4-829-4083\ \ \  fax: 972-4-829-3388

\address{Department of Mathematics\\
Technion---Israel Institute of Technology\\
Haifa, 32000\\ Israel}
\email{ pinsky@technion.ac.il}

\urladdr{https://pinsky.net.technion.ac.il/}

\subjclass[2010]{60C05, 05A05} \keywords{}
\date{}

\begin{abstract}

Let $S_n$ denote the set of permutations of $[n]$ and let $\sigma=\sigma_1\cdots\sigma_n\in S_n$. For
any subsequence $\{\sigma_{i_j}\}_{j=1}^k$ of $\{\sigma_i\}_{i=1}^n$ of length $k\ge2$, construct
 the ``up/down'' sequence $V_1\cdots V_{k-1}$ defined by
$$
V_j=\begin{cases} U,\ \text{if}\ \sigma_{i_{j+1}}-\sigma_{i_j}>0;\\ D,\ \text{if}\ \sigma_{i_{j+1}}-\sigma_{i_j}<0,\end{cases}
$$
where  $U$ refers to ``up'',  $D$  to  ``down'' and $V$ to  ``vertical''.
Consider now
a fixed up/down pattern:  $V_1\cdots V_l$, where $l\in\mathbb{N}$ and $V_j\in\{U, D\},\ j\in[l]$. Given a permutation $\sigma\in S_n$,
consider  the length of the longest  subsequence of $\sigma$ that repeats this pattern. Incomplete patterns are not counted, so the  length is necessarily either 0 or of the form
$kl+1$, where $k\in \mathbb{N}$.
 For example, consider  $l=3$ and $V_1V_2V_3=UUD$.
Then for the   permutation $342617985\in S_9$, the length of the  longest subsequence  that repeats the pattern $UUD$ is 7; it is obtained by three different subsequences, namely
3461798, 3461795 and 3461785.

The above framework includes one  much studied case as well as another case that has been studied to some degree.
The pattern $U$  is the celebrated case of the  longest increasing subsequence.
The pattern $UD$ (or $DU$) is the case of   the longest alternating subsequence.
These have been studied both under the uniform distribution on $S_n$ as well as under the uniform distribution on those permutations in $S_n$ which avoid a particular pattern of length three.

In this paper, we consider the patterns $UUD$ and $UUUD$ under the uniform distribution on those permutations in $S_n$ that avoid the pattern $132$.
We prove that the expected value of the longest increasing subsequence  following the pattern $UUD$ is asymptotic to $\frac37n$ and the expected value of the longest increasing subsequence  following the pattern $UUUD$ is asymptotic to $\frac4{11}n$. (For $UD$ (alternating subsequences) it is known to be $\frac12n$.)
This leads directly to appropriate  corresponding results for permutations  avoiding any particular pattern of length three.
\end{abstract}

\maketitle
\section{Introduction and Statement of Results}\label{intro}

Let $S_n$ denote the set of permutations of $[n]=\{1,\cdots, n\}$ and let $\sigma=\sigma_1\cdots\sigma_n\in S_n$. For
any subsequence $\{\sigma_{i_j}\}_{j=1}^k$ of $\{\sigma_i\}_{i=1}^n$ of length $k\ge2$, construct
 the ``up/down'' sequence $V_1\cdots V_{k-1}$ defined by
$$
V_j=\begin{cases} U,\ \text{if}\ \sigma_{i_{j+1}}-\sigma_{i_j}>0;\\ D,\ \text{if}\ \sigma_{i_{j+1}}-\sigma_{i_j}<0,\end{cases}
$$
where  $U$ refers to ``up'', $D$ refers to ``down'' and $V$ refers to ``vertical''.

Consider now
a fixed up/down pattern:  $V_1\cdots V_l$, where $l\in\mathbb{N}$ and $V_j\in\{U, D\},\ j\in[l]$. Given a permutation $\sigma\in S_n$,
consider  the length of the longest  subsequence of $\sigma$ that repeats this pattern. Incomplete patterns are not counted, so the  length is necessarily either 0 or of the form
$kl+1$, where $k\in \mathbb{N}$.
 For example, consider  $l=3$ and $V_1V_2V_3=UUD$.
Then for the   permutation $342617985\in S_9$, the length of the  longest subsequence  that repeats the pattern $UUD$ is 7; it is obtained by three different subsequences, namely
3461798, 3461795 and 3461785. On the other hand, for the permutation 319652478, the length of the longest
subsequence  that repeats the pattern $UUD$ is 0 because this pattern  does not appear at all.

The above framework includes one very celebrated and much studied case as well as another case that has been studied to some degree.
The pattern $U$ is the case of the longest increasing subsequence.
This celebrated case was studied by Logan and Shepp \cite{LS} and Vershik and Kerov \cite{VK}. Their worked showed that the expected value of the length of the longest increasing subsequence in
a uniformly random permutation from $S_n$ behaves asymptotically as $2\sqrt n$. More precise information on the behavior of this random variable was obtained later
in the seminal  paper of Baik, Deift and Johansson \cite{BDJ}; for more on the longest increasing subsequence and many additional references, see the book by Romik \cite{R}.

The pattern $UD$ (or $DU$) is the case of the  longest alternating subsequence (in the first case starting with increasing and ending with decreasing, and in the second case vice versa).
Stanley \cite{S} investigated alternating sequences and showed that the expected value of the longest alternating  subsequence in a uniformly random permutation from $S_n$ behaves asymptotically as $\frac23 n$.
(Of course this asymptotic behavior is independent of how we define the initial or terminal direction in the sequence.) See also further results  by Widom \cite{W}.
The analysis in the alternating case is simpler than in the increasing case because, as  Stanley noted,  there is always a longest alternating subsequence (either beginning with down or ending with up)  of $\sigma\in S_n$ which contains the number $n$. Thus, a longest subsequence can be broken down into smaller pieces that are concatenated.

In \cite{S}, Stanley also posed the question of  whether it is true that  for any pattern of ups and downs as we have defined above, there  exist constants $\mu$ and $c$ such that
the expected value of the length of the longest subsequence repeating this pattern in a uniformly random permutation from $S_n$ behaves asymptotically as $\mu n^c$.
The recent paper \cite{ABLLP}  answered this question in the affirmative, and showed in particular that  for every pattern except for the pattern $U$ corresponding to the longest increasing subsequence, one has
$c=1$. Thus, for every pattern except for $U$, the asymptotic behavior of the expected value of the  length of the longest subsequence repeating that pattern is $\mu n$, for some $\mu\in(0,1)$.
The authors of \cite{ABLLP}  did not explicitly calculate
the value of $\mu$; however, they constructed a dynamical system that can be  used to approximate $\mu$.
They also proved a central limit theorem for the length of the longest subsequence repeating any particular pattern, except for the pattern $U$.

In this paper, instead of considering a permutation that is  uniformly random from $S_n$, we consider
 a permutation that is uniformly random from those permutations in $S_n$ which avoid a certain
 pattern of length three.
Our aim is to calculate explicitly the  asymptotic behavior of the expected value of the length of the longest subsequence repeating certain patterns of ups and downs in such a random permutation.
Before continuing to explain this,
we recall the definition of pattern-avoiding for permutations.
If $\sigma=\sigma_1\cdots\sigma_n\in S_n$ and $\tau=\tau_1\cdots\tau_m\in S_m$, where $2\le m<n$,
then we say that $\sigma$ contains $\tau$ as a pattern if there exists a subsequence $1\le i_1<i_2<\cdots<i_m\le n$ such
that for all $1\le j,k\le m$, the inequality $\sigma_{i_j}<\sigma_{i_k}$ holds if and only if the inequality $\tau_j<\tau_k$ holds.
If $\sigma$ does not contain $\tau$, then we say that $\sigma$  avoids $\tau$.
We consider here permutations on $S_n$ that avoid a pattern $\tau\in S_3$.
Denote by $S_n(\tau)$  the set of permutation in $S_n$ that avoid $\tau$. It is well-known  that
$|S_n(\tau)|=C_n$,  for all six permutations $\tau\in S_3$, where $C_n=\frac1{n+1}\binom{2n}n$, $n\in\mathbb{N}$, is the $n$th
Catalan number \cite{B}.
Let $P_n^{\text{av}(\tau)}$ denote the uniform  probability measure on $S_n(\tau)$ and let $E_n^{\text{av}(\tau)}$ denote the corresponding expectation.

As already noted, the pattern $U$ corresponds to increasing subsequences.
In \cite{DHW}, the asymptotic behavior of the expectation   of the longest increasing subsequence $L_n$ of a random permutation under the distribution
$P_n^{\text{av}(\tau)}$ was obtained for all six permutations $\tau\in S_3$. Of course, the case $\tau=123$ is trivial. The
 expectation is on the order $n$ only for $\tau\in\{231,312,321\}$.
The asymptotic behavior of the variance $v_n(\tau)$ was also investigated, and
the limiting distribution
of $\frac{L_n-E_n^{\text{av}(\tau)}L_n}{v_n(\tau)}$ was calculated, the limit being
Gaussian only for  $\tau\in\{231,312\}$. Large deviations were considered in \cite{P}.

As already noted,  the pattern $UD$ (or $DU$) corresponds to alternating subsequences,
In \cite{FMW}, the asymptotic behavior of the expectation  of the longest alternating sequence
of a random permutation under the distribution
$P_n^{\text{av}(\tau)}$ was shown to be $\frac n2$ for all six choices of $\tau\in S_n$.
The asymptotic variance was also obtained as well as a central limit theorem. Large deviations were considered in \cite{P}.

In this paper, for the patterns $UUD$ and $UUUD$, we will calculate the asymptotic behavior of the expectation of the longest subsequence repeating that pattern in a uniformly random permutation avoiding the pattern
$132$. The proof in the case of $UUD$ involves analyzing the asymptotic behavior of the  coefficients of either of  two generating functions that satisfy  a system of two linear equations. The calculations are somewhat involved. The proof in the case $UUUD$ involves analyzing the asymptotic behavior of the coefficients of any one of three generating functions
that satisfy a system of three linear equations. Here the calculation are quite involved. Using the same method, we could also obtain the asymptotic behavior of the variance, but we have decided not to pursue this, as those calculations would be even more involved.
In principle, our technique can be continued for the pattern $U^lD$, for any $l\in \mathbb{N}$, where $U^l$ indicates $l$ consecutive $U$'s.
However, this involves solving a system of $l$ linear equations for $l$ generating functions, solving explicitly for one of them,  and then analyzing its coefficients. It also involves the solving of an auxiliary set
of equations to calculate the probability that $\sigma\in S_n(132)$ does not have an increasing subsequence of length $j$, for $j=1,\cdots, l$ (an extension of Lemma \ref{probA=0lemma} in Section \ref{sec3}).
The reason the cases $U^lD$ are in principle tractable for 132-avoiding permutations is that a  variant of Stanley's observation holds in these cases; namely, that for a 132-avoiding permutation,  there is always either  a  longest subsequence repeating the pattern $U^lD$ that contains the number $n$, or else, every such longest subsequence starts after the appearance of the number $n$ in the permutation.

Using the reversal, complementation, and reversal-complementation operations for permutations, the  results we obtain for permutations avoiding the pattern 132  can be translated into similar results for permutations avoiding any one of the patterns
$213,231, 312$. Using a well-known bijection between permutations avoiding the pattern 132 and permutations avoiding the pattern 123, along with reversal, the results we obtain
 can be translated into similar results for
permutations avoiding either of the patterns 123, 321.

We now state two theorems for 132-avoiding permutations, one for the pattern $UUD$ and one for the pattern $UUUD$, and then state a corollary of these two theorems
that contains similar results for $\tau$-avoiding permutations for the other five $\tau\in S_3$.
\begin{theorem}\label{thmUUD}
Let $L^{U^2D}_n(\sigma)$ denote the length of the longest subsequence of the repeated pattern $UUD$ in $\sigma\in S_n(132)$. (So $L^{U^2D}_n(\sigma)$ is either equal to 0 or to $3k+1$ for some $k\in\mathbb{N}$.)
Then
\begin{equation}\label{UUDexp}
E_n^{\text{av}(132)}L^{U^2D}_n\sim\frac37n.
\end{equation}
\end{theorem}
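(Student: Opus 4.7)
The plan is to decompose any $\sigma\in S_n(132)$ at the position of its maximum entry $n$ and derive a $2\times 2$ linear system of generating-function equations whose singularity analysis yields the stated constant $\frac{3}{7}$. First I would invoke the standard structural decomposition for 132-avoiding permutations: if $\sigma_m=n$, then $\sigma_1\cdots\sigma_{m-1}$ is a 132-avoiding permutation of $\{n-m+1,\dots,n-1\}$ while $\sigma_{m+1}\cdots\sigma_n$ is a 132-avoiding permutation of $\{1,\dots,n-m\}$, and every entry before position $m$ exceeds every entry after it. This makes the two blocks conditionally independent uniform 132-avoiding permutations under $P_n^{\text{av}(132)}$ and underlies the Catalan convolution $C_n=\sum_{m=1}^n C_{m-1}C_{n-m}$. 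Since $n$ is the global maximum, in a $UUD$-subsequence of length $3k+1$ it can occupy only a peak position $3j$, $1\le j\le k$. The crucial observation, a variant of Stanley's for the alternating case, is that whenever $m<n$ the longest $UUD$-subsequence may be assumed either to contain $n$ or to lie entirely in the right block $\sigma_{m+1}\cdots\sigma_n$: if a longest subsequence crosses position $m$ without using $n$, the crossing step is forced to be a $D$-step and $n$ may be substituted for its peak; and if a longest subsequence lies entirely in the left block, its last peak--valley pair can be replaced by $n$ followed by any entry from the right block. The boundary case $m=n$ is handled separately: there $L^{U^2D}_n(\sigma)=L^{U^2D}_{n-1}(\sigma_1\cdots\sigma_{n-1})$.

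When $n$ sits at peak position $3j$, the $UUD$-subsequence splits as a length-$(3j-1)$ piece in the left block with pattern $(UUD)^{j-1}U$, the entry $n$ itself, and a length-$(3(k-j)+1)$ ordinary $UUD$-piece in the right block (collapsing to a single entry when $k=j$). This motivates the auxiliary length $\Bb(\sigma)$, defined as the longest length of the form $3j-1$, $j\ge 1$, of a subsequence of $\sigma$ following the pattern $(UUD)^{j-1}U$. Summing the resulting identity over $\sigma\in S_n(132)$ and splitting by the position $m$ of $n$ produces a pair of Catalan-convolution identities for $a_n:=\sum_{\sigma\in S_n(132)}L^{U^2D}_n(\sigma)$ and $b_n:=\sum_{\sigma\in S_n(132)}\Bb(\sigma)$, which translate, via the Catalan generating function $C(x)=\sum_n C_nx^n=(1-\sqrt{1-4x})/(2x)$, into a $2\times 2$ linear system for $\A(x)=\sum_n a_nx^n$ and $\B(x)=\sum_n b_nx^n$ whose coefficients are polynomial in $x$ and $C(x)$. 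Solving this linear system yields a closed form for $\A(x)$.

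The asymptotics of $a_n$ are extracted from the dominant singularity of $\A(x)$ at $x=1/4$, which is inherited from $C(x)$. Substituting the singular expansion $C(x)=2-2\sqrt{1-4x}+O(1-4x)$ into the closed form and applying the Flajolet--Odlyzko transfer theorem identifies the dominant singular term as $\gamma(1-4x)^{-1/2}$ for some constant $\gamma$, giving $a_n\sim\gamma\binom{2n}{n}$; since $C_n=\binom{2n}{n}/(n+1)$, dividing yields $E_n^{\text{av}(132)}L^{U^2D}_n=a_n/C_n\sim\gamma n$, and a direct computation identifies $\gamma=\frac{3}{7}$. The principal obstacle is precisely this final extraction: although the solution of the $2\times 2$ system is explicit, isolating the leading coefficient $\frac{3}{7}$ requires careful tracking of cancellations when $C(x)$ and $\sqrt{1-4x}$ are substituted into the closed form. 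Secondary technical care is needed for several small corrections that do not affect the leading order: the event that the left block is strictly decreasing (so that $\Bb$ vanishes) occurs with probability $1/C_{m-1}$, and the $\max(1,\cdot)$ that arises because a $UUD$-subsequence containing $n$ requires at least one entry after $n$ even when the right block admits no $UUD$-subsequence of length $\ge 4$ contributes only lower-order terms.
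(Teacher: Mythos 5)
Your overall architecture coincides with the paper's: decompose at the position of the maximum entry $n$, observe that $n$ can only occupy a peak of a $UUD$-subsequence and that a longest such subsequence may be assumed either to contain $n$ or to lie in the right block, introduce the auxiliary statistic for the pattern $(UUD)^{j-1}U$ in the left block, derive a $2\times 2$ linear system for the two generating functions over $S_n(132)$, and extract the constant by singularity analysis at $t=\frac14$ via the transfer theorem. All of this is exactly what the paper does.

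There is, however, one genuine soft spot: you dismiss the degenerate cases (the $\max(1,\cdot)$ and the decreasing-block events) as ``small corrections that do not affect the leading order.'' They are not additive lower-order corrections. The homogeneous part of the $2\times 2$ system forces the trivial solution, so the entire mass of the answer comes from the inhomogeneous terms that precisely these cases generate: in the paper's closed form the numerator of $\mathcal{B}^{U^2D}(t)$ is $t^2(C(t)-1)^2$, with one factor $t(C(t)-1)$ being the inhomogeneous term arising from the event that the right block is decreasing, and the constant $\frac17$ (hence $\frac37$) is directly proportional to its value near $t=\frac14$. Mishandling these terms changes the constant, not an error term. A second, related issue is your choice to track the lengths themselves: then $E\bigl[\max(L^{U^2D}_{l},1)\bigr]=E\bigl[L^{U^2D}_{l}\bigr]+P(L^{U^2D}_{l}=0)$, and the quantity $P(L^{U^2D}_{l}=0)$ (the probability that a $132$-avoider contains no $UUD$ at all) enters the inhomogeneous part of your system as a third unknown sequence whose generating function you would have to compute and evaluate at $\frac14$. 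The paper sidesteps both problems by working with the pattern counts $B^{U^2D}_n=\frac13(L^{U^2D}_n-1)$ for $L^{U^2D}_n\neq 0$ (and $0$ otherwise) and $A^{U^2D}_n=k+1$ for the auxiliary pattern of length $3k+2$: with these normalizations $\max(L,1)=3B+1$ identically, the only degenerate probability needed is $P(A^{U^2D}_l=0)=1/C_l$, and the system genuinely closes as $2\times 2$. You should either adopt that reparametrization or supply the extra generating function $\sum_l \#\{\sigma\in S_l(132): L^{U^2D}_l(\sigma)=0\}\,t^l$ and carry it through the computation.
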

\begin{theorem}\label{thmUUUD}
Let $L^{U^3D}_n(\sigma)$ denote the length of the longest subsequence of the repeated pattern $UUUD$ in $\sigma\in S_n(132)$. (So $L^{U^3D}_n(\sigma)$ is either equal to 0 or to $4k+1$ for some $k\in\mathbb{N}$.)
Then
\begin{equation}\label{UUUDexp}
E_n^{\text{av}(132)}L^{U^3D}_n\sim\frac4{11}n.
\end{equation}
\end{theorem}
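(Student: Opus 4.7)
The pattern $U^3D$ falls in the class $U^lD$ discussed at the end of the introduction, so the plan is to imitate the strategy used for $U^2D$: combine a structural dichotomy with a linear system of generating functions, and then extract asymptotics by singularity analysis.

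I would first establish the structural dichotomy. For $\sigma\in S_n(132)$ decompose $\sigma=\alpha n\beta$, where $n$ sits at position $m$, $\alpha\in S_{m-1}(132)$ consists of the top $m-1$ values and $\beta\in S_{n-m}(132)$ consists of the bottom $n-m$ values (the standard $132$-avoiding decomposition, since any element before $n$ that is smaller than one after $n$ would form a $132$ pattern with $n$). A replacement argument in the spirit of Stanley's observation for alternating subsequences then shows: if a longest subsequence of $\sigma$ repeating $U^3D$ uses any element of $\alpha$ (possibly spanning into $\beta$ without containing $n$), then the subsequence element at some position $4j$ has incoming signal $U$ and outgoing signal $D$, and replacing it with $n$ preserves all signals, producing a longest such subsequence that contains $n$ at an interior peak. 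Consequently, either some longest subsequence repeating $U^3D$ contains $n$, or every longest such subsequence lies entirely within $\beta$.

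For the recursion, for each $\sigma$ and $r\in\{0,1,2,3\}$ define $g_r(\sigma)$ to be the length of the longest subsequence of $\sigma$ following the pattern $(U^3D)^mU^r$ for some $m\ge 0$; then $L^{U^3D}_n(\sigma)$ equals $g_0(\sigma)$ outside a trivial length-$1$ edge case. The dichotomy yields, for $\sigma=\alpha n\beta$,
\[
L^{U^3D}_n(\alpha n\beta)=\begin{cases}
L^{U^3D}_{m-1}(\alpha), & \text{if }\beta=\emptyset,\\
L^{U^3D}_{n-m}(\beta), & \text{if }\beta\neq\emptyset\text{ and }\alpha\text{ has no 3-term increasing subsequence,}\\
g_2(\alpha)+1+g_0(\beta), & \text{if }\beta\neq\emptyset\text{ and }\alpha\text{ has a 3-term increasing subsequence,}
\end{cases}
\]
and parallel relations hold for $g_1$ and $g_2$, now with the additional case that $n$ can appear as the terminal element of the subsequence (giving, e.g.\ $g_2(\alpha n\beta)=g_1(\alpha)+1$ when $n$ is last in a $g_2$-subsequence). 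Summing over $(\alpha,m,\beta)$ as $\sigma$ ranges over $S_n(132)$ converts these relations into a \emph{linear} system of three coupled equations in the three power series $\mathcal G_r(x)=\sum_n x^n\sum_{\sigma\in S_n(132)}g_r(\sigma)$, $r=0,1,2$, with the Catalan generating function $\mathcal C(x)$ and the power series enumerating $\sigma\in S_n(132)$ with no $j$-term increasing subsequence for $j=3,4$ appearing as known inhomogeneities; the latter are supplied by the natural $l=3$ extension of Lemma~\ref{probA=0lemma}.

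Solving the $3\times 3$ system for $\mathcal G_0(x)$, whose coefficients equal $\Aaa_n=\sum_{\sigma\in S_n(132)}L^{U^3D}_n(\sigma)$ up to a known lower-order correction from the length-$1$ edge case, yields an explicit expression algebraic in $x$ and $\mathcal C(x)$. Since $\mathcal C(x)=(1-\sqrt{1-4x})/(2x)$ has its dominant singularity at $x=1/4$, so does the solution; a local Puiseux expansion there, combined with a Flajolet--Sedgewick transfer theorem, delivers $\Aaa_n\sim c\,n\,C_n$ for an explicit constant $c$, which the algebra evaluates to $c=\tfrac{4}{11}$. Dividing by $C_n$ then yields \eqref{UUUDexp}. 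The principal obstacle is computational rather than conceptual: the dichotomy, the passage to the $g_r$'s, and the reduction to a linear system are all direct extensions of the $U^2D$ case, but the elimination in the $3\times 3$ system and the Puiseux expansion near $x=1/4$ must be carried out with enough precision to isolate the exact rational constant $4/11$ rather than a numerical approximation.
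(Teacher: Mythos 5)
Your proposal follows essentially the same route as the paper: the decomposition $\sigma=\alpha\, n\,\beta$ of a $132$-avoider, a Stanley-type dichotomy forcing $n$ to sit at a peak of some longest $(U^3D)$-repeating subsequence, three coupled quantities tracking the partial patterns $(U^3D)^mU^r$ for $r=0,1,2$ (the paper's $B^{U^3D}_n$, $G^{U^3D}_n$, $A^{U^3D}_n$), the count $2^{l-1}$ of $132$-avoiders with no increasing subsequence of length three (Lemma \ref{probA=0lemma}), a linear $3\times3$ system for the generating functions, and transfer-theorem asymptotics at $t=\tfrac14$. The only discrepancies are cosmetic: the paper works with block counts rather than lengths (which sidesteps your length-$0$/length-$1$ edge case), and the auxiliary enumerations actually required are of permutations with no increasing subsequence of length $j=2$ and $j=3$, not $j=3$ and $j=4$.
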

\bf\noindent Remark.\rm\ Recall that it was noted above  that for the repeated pattern $UD$ (which corresponds to alternating subsequences), the corresponding asymptotic behavior is $\frac12n$.
\begin{corollary}\label{cor}
\noindent i.
Let $L^{V_1V_2V_3}_n(\sigma)$ denote the length of the longest subsequence of the repeated pattern $V_1V_2V_3$ in $\sigma\in S_n$, where $V_i\in \{U,D\}$, $i=1,2,3$.
Then
\begin{equation}\label{corUUDexp}
E_n^{\text{av}(\tau)}L^{V_1V_2V_3}_n\sim\frac37n,
\end{equation}
for the following five pairs of $V_1V_2V_3$ and $\tau$:  $UDD$ and 231;  $DDU$ and 312;  $DUU$ and 213; $UDD$ and 123;  $UUD$ and 321.

\noindent ii. Let $L^{V_1V_2V_3V_4}_n(\sigma)$ denote the length of the longest subsequence of the repeated pattern $V_1V_2V_3V_4$ in $\sigma\in S_n$, where $V_i\in \{U,D\}$, $i=1,2,3,4$.
Then
\begin{equation}\label{corUUUDexp}
E_n^{\text{av}(\tau)}L^{V_1V_2V_3V_4}_n\sim\frac4{11}n,
\end{equation}
for the following five pairs of $V_1V_2V_3$ and $\tau$:  $UDDD$ and 231;  $DDDU$ and 312;  $DUUU$ and 213; $UDDD$ and 123;  $UUUD$ and 321.

\end{corollary}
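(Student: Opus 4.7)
The plan is to deduce the corollary from Theorems~\ref{thmUUD} and \ref{thmUUUD} using the three standard involutions on $S_n$ together with one well-known bijection between pattern-avoiding classes; no new probabilistic estimate is needed.

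First I would verify how reversal $R$, complementation $C$, and reversal-complementation $RC$ act simultaneously on the avoidance class and on the up/down pattern of a subsequence. A direct calculation with the values $\sigma_{i_1},\ldots,\sigma_{i_k}$ of a subsequence shows, writing $\bar U = D$ and $\bar D = U$, that $R$ maps $S_n(132)$ bijectively onto $S_n(231)$ and sends a subsequence pattern $V_1\cdots V_l$ to $\bar V_l\cdots\bar V_1$; that $C$ maps $S_n(132)$ onto $S_n(312)$ and sends $V_1\cdots V_l$ to $\bar V_1\cdots\bar V_l$; and that $RC$ maps $S_n(132)$ onto $S_n(213)$ and sends $V_1\cdots V_l$ to $V_l\cdots V_1$. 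Applied to $UUD$ these produce $UDD$, $DDU$, $DUU$ respectively, and applied to $UUUD$ they produce $UDDD$, $DDDU$, $DUUU$. Since each of $R$, $C$, $RC$ is a bijection that sets up a length-preserving correspondence between subsequences of $\sigma$ realizing the original pattern and subsequences of the image permutation realizing the transformed pattern, Theorems~\ref{thmUUD} and \ref{thmUUUD} immediately yield six of the ten claimed asymptotics, namely the pairs with $\tau\in\{231,312,213\}$ in both parts.

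For the remaining four entries, with $\tau\in\{123,321\}$, I would invoke a well-known bijection $\psi:S_n(132)\to S_n(123)$ that preserves the length of the longest subsequence of any prescribed up/down type --- for instance a descent-preserving bijection of the kind due to Simion--Schmidt, West, or Krattenthaler. Transporting Theorems~\ref{thmUUD} and \ref{thmUUUD} (preceded if necessary by one of the symmetries above) through $\psi$ gives the corresponding asymptotics on $S_n(123)$, and a further application of $R$ (which maps $S_n(123)$ onto $S_n(321)$ and transforms the subsequence pattern as described) gives the asymptotics on $S_n(321)$. Careful bookkeeping of the pattern transformations induced along this chain matches precisely the remaining pairs listed in parts (i) and (ii).

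The main obstacle is this last step: verifying that the chosen bijection $\psi$ really does preserve $L_n^P$ for the specific patterns appearing here. This reduces to a combinatorial check tracing a subsequence of $\sigma\in S_n(132)$ that realizes the relevant pattern through the explicit definition of $\psi$, and confirming that its image is a subsequence of $\psi(\sigma)$ of the same length realizing the transformed pattern. Once this invariance is established, the corollary follows by routine symmetry bookkeeping, with no further probabilistic input beyond Theorems~\ref{thmUUD} and \ref{thmUUUD}.
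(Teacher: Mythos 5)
Your handling of the cases $\tau\in\{231,312,213\}$ by reversal, complementation, and reversal--complementation is correct and is precisely the paper's argument, including the induced pattern transformations $UUD\mapsto UDD,\ DDU,\ DUU$ (and likewise for $UUUD$).

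The gap is in the cases $\tau\in\{123,321\}$. You propose to invoke a bijection $\psi\colon S_n(132)\to S_n(123)$ that ``preserves the length of the longest subsequence of any prescribed up/down type.'' No such bijection can exist: a $123$-avoiding permutation has no increasing subsequence of length three, hence no subsequence of type $UU$, so $L^{UUD}_n\equiv 0$ on $S_n(123)$, while $L^{UUD}_n$ is not identically zero on $S_n(132)$ for $n\ge 4$. (Even for the single pattern $U$ the Simion--Schmidt map fails to preserve lengths: $34512\in S_5(132)$ maps to $35412$, and the longest increasing subsequence drops from $3$ to $2$.) The same observation shows that the pattern \emph{must} change under the transfer --- which is why the corollary pairs $UDD$, not $UUD$, with $123$ --- yet you never identify what the transformed pattern is or why. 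The paper's actual argument is specific to patterns of the form $U^lD$: the Simion--Schmidt bijection fixes the left-to-right minima of $\sigma\in S_n(132)$ and rearranges the remaining entries in decreasing order, so that between two consecutive left-to-right minima the entries of $\sigma$ are increasing while those of $\psi(\sigma)$ are decreasing; one then checks that each such block contributes either zero or one copy of $UUD$ (resp.\ $UUUD$) to a maximal repeated-pattern subsequence of $\sigma$, and exactly the same number of copies of $UDD$ (resp.\ $UDDD$) in $\psi(\sigma)$. This block-by-block count, rather than tracing individual subsequences through $\psi$, is what transfers the asymptotics to $S_n(123)$ with the transformed pattern; the final passage to $\tau=321$ by reversal (sending $UDD$ to $UUD$ and $UDDD$ to $UUUD$) you do have right. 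As written, the ``main obstacle'' you defer is not a routine verification of a true statement but an attempt to verify a false one, so the second half of your proof does not go through.
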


\noindent \it Proof of Corollary.\rm\
Recall that the \it reverse\rm\  of a permutation $\sigma=\sigma_1\cdots\sigma_n$ is the permutation $\sigma^{\text{rev}}:=\sigma_n\cdots\sigma_1$,
and the \it complement\rm\ of $\sigma$ is the permutation
$\sigma^{\text{com}}$ satisfying
$\sigma^{\text{com}}_i=n+1-\sigma_i,\ i=1,\cdots, n$.
Let $\sigma^{\text{rev-com}}$ denote the permutation obtained by applying reversal and then complementation to $\sigma$ (or equivalently, vice versa).
Since $132^{\text{rev}}=231$, $132^{\text{comp}}=312$ and $132^{\text{rev-com}}=213$, if follows that
the three operations, reversal, complementation and reversal-complementation, are bijections from $S_n(132)$ to $S_n(\tau)$, with $\tau=231$ in the case of reversal, $\tau=312$ in the case of complementation and $\tau=213$ in the case of reversal-complementation.
From these facts and Theorems \ref{thmUUD} and \ref{thmUUUD},
the corollary follows immediately for $\tau\in\{231, 312, 213\}$.

There is a well-known explicit bijection between $S_n(132)$ and $S_n(123)$ \cite{SS,B}. Recall that an entry $j\in[n]$ of a permutation
$\sigma\in S_n$ is called a \it left-to-right minimum \rm if
$\sigma_j=\min\{\sigma_i:1\le i\le j\}$.
For  a permutation $\sigma\in S_n(132)$, let $\{i_j\}_{j=1}^k$ denote its left-to-right minima.  Then necessarily
the entries of $\sigma$ that appear from left to right between $\sigma_{i_j}$ and $\sigma_{i_{j+1}}$ (or after  $\sigma_{i_k}$ up through the final term in the permutation)  are  increasing, with each entry being the smallest number remaining that is larger than its predecessor. (In particular, the left most such entry is the smallest remaining number larger than $\sigma_{i_j}$.) The bijection between
$S_n(132)$ and $S_n(123)$ preserves the set of  left-to-right minima, and then rearranges all of the other entries in descending order from left to right.
Note that the values of the permutation at the left-to-right minima  form a decreasing sequence, and these other rearranged entries also form a decreasing sequence; thus the permutation obtained is the union of two decreasing sequences, which is equivalent to its being 123-avoiding. One can check easily that for the pattern $UUD$ (or $UUUD$), there is either no copy  or  one copy of the pattern
between two consecutive left-to-right minima in the permutation $\sigma\in S_n(132)$, and that the same number of copies of $UDD$ (or $UDDD$) appear between those two consecutive left-to-right minima in the $123$-avoiding permutation obtained from $\sigma$ via the above described bijection. This proves the corollary for $\tau=123$. Applying reversal to 123 proves the corollary for $\tau=321$.
\hfill $\square$

We prove Theorem \ref{thmUUD} in Section \ref{pfthm1}.
We derive a system of  two linear equations for two generating functions, and then solve explicitly for one of them. These generating functions are connected to the expected number of complete
patterns $UUD$ in a maximal subsequence.
The leading order  asymptotic behavior of the coefficients of either of these generating functions is equal to the leading order asymptotic behavior of $\frac13C_nE_n^{\text{av}(132)}L^{U^2D}_n$.
Performing an asymptotic analysis of the coefficients of this generating function yields the proof of the theorem.

The proof of Theorem \ref{thmUUUD} is much longer. In Section \ref{sec3} we derive a system of three linear equations for three generating functions, and then solve explicitly for one of them. The explicit expression for this generating function is quite involved.
 These generating functions are connected to the expected number of complete
patterns $UUUD$ in a maximal subsequence.
The leading order  asymptotic behavior of the coefficients of any of these three generating functions is equal to the leading order asymptotic behavior of $\frac14C_nE_n^{\text{av}(132)}L^{U^3D}_n$.
In Section \ref{sec4} we perform a lot of algebra in order to obtain the generating function in a more manageable form.  Then we perform
an asymptotic analysis of the coefficients of this generating function to yield the proof of the theorem.

\section{Proof of Theorem \ref{thmUUD}}\label{pfthm1}
For $\sigma\in S_n$ and $n\in\mathbb{N}$, define $B^{U^2D}_n(\sigma)$ to be the number of complete sets of $UUD$ in a longest subsequence in $\sigma$ of the repeated pattern $UUD$.
Thus,
%for the permutation
%$342617985\in S_9$, one has $B^{U^2D}_9(\sigma)=2$.
%In general,
\begin{equation}\label{BnLn}
B^{U^2D}_n(\sigma)=\begin{cases}\frac13\left(L_n^{U^2D}(\sigma)-1\right), \ \text{if}\ L_n^{U^2D}(\sigma)\neq0;\\ 0, \text{if}\ L_n^{U^2D}(\sigma)=0.\end{cases}
\end{equation}
Also, for convenience, we define $B^{U^2D}_0\equiv0$.

For $\sigma\in S_n$ and $n\in\mathbb{N}$, define $A^{U^2D}_n(\sigma)=0$, if $\sigma=n\cdots 21$; otherwise,  find a longest   subsequence $\{\sigma_{i_j}\}_{j=1}^{3k+2}$, $k\in\mathbb{Z}^+$, of $\sigma$ for which the up/down pattern is $UUD\cdots UUDU$, and define
$A^{U^2D}_n(\sigma)=k+1$. For convenience, we define $A^{U^2D}_0\equiv0$.

 In the sequel, for any $j\in\mathbb{N}$,  $\Bb_j$ and $\Aa_j$ will always be considered as random variables on the probability space $\left(S_j(132),P_j^{\text{av}(132)}\right)$.
Define
\begin{equation}\label{anbn}
\begin{aligned}
&b_n=E_n^{\text{av}(132)}B^{U^2D}_n;
&a_n=E_n^{\text{av}(132)}A^{U^2D}_n,
\end{aligned}
\end{equation}
where we have suppressed the notation $U^2D$.
Define the generating functions for $\{C_nb_n\}_{n=0}^\infty$ and $\{C_na_n\}_{n=0}^\infty$ by
\begin{equation}\label{genfuncsAB}
\begin{aligned}
&\B(t)=\sum_{n=0}^\infty C_nb_nt^n;\\
&\A(t)=\sum_{n=0}^\infty C_na_nt^n.\\
\end{aligned}
\end{equation}
Also let $C(t)=\sum_{n=0}^\infty C_nt^t$ denote the generating function of the Catalan numbers, where we define $C_0=1$. As is well-known,
\begin{equation}\label{Catgenfunc}
C(t)=\frac{1-\sqrt{1-4t}}{2t}.
\end{equation}

The following definition will be useful. Let $a_1<a_2\cdots <a_m$ be real numbers and let $\rho=\rho_1\cdots\rho_m$ be a permutation of these numbers.
We define $\text{red}(\rho)\in S_m$, the reduction of $\rho$, to be the permutation in $S_m$ that has the same pattern as $\rho$. That is,
$\text{red}(\rho)=\sigma$ if $\sigma$ satisfies $\sigma_i<\sigma_j$ whenever $\rho_i<\rho_j$, $i,j\in[m]$.
Note that the up/down pattern that one can associate with $\rho=\rho_1\cdots\rho_m$ is the same as the up/down pattern associated with $\text{red}(\rho)$.
Every permutation $\sigma\in S_n(132)$  has the property that if $\sigma_j=n$, then the numbers $\{n-j+1,\cdots, n-1\}$ appear in the first $j-1$ positions in $\sigma$ and the numbers
$\{1,\cdots, n-j\}$ appear in the last $n-j$ positions in $\sigma$.
From this fact, along with the fact that $|S_n(132)|=C_n$, it follows that
\begin{equation}\label{nprob}
P_n^{\text{av}(132)}(\sigma_j=n)=\frac{C_{j-1}C_{n-j}}{C_n},\ \text{for}\ j\in[n].
\end{equation}
It also follows that under the conditioned  measure $P_n^{\text{av}(132)}|\{\sigma_j=n\}$, the permutation $\text{red}(\sigma_1\cdots\sigma_{j-1})\in S_{j-1}$ has the distribution
$P_{j-1}^{\text{av}(132)}$,  the permutation $\sigma_{j+1}\cdots\sigma_n\in S_{n-j}$ has the distribution $P_{n-j}^{\text{av}(132)}$, and these two permutations are independent.

We now derive a system  of two linear equations for $\B(t)$ and $\A(t)$, and then solve for one of them explicitly.
From the definitions of $\Bb_n$ and $\Aa_n$, we have
$$
\Bb_n\equiv0,\ 0\le n\le 3;\ \ \Aa_n\equiv0,\ 0\le n\le 1.
$$
Thus,
\begin{equation}\label{ABbegin}
\begin{aligned}
&b_n=0,\ 0\le n\le 3;\\
&a_n=0,\ 0\le n\le 1.\\
\end{aligned}
\end{equation}
The following proposition is the key to obtaining a pair of linear equations for the generating functions $\B(t)$ and $\A(t)$.
\begin{proposition}\label{conddist}
\noindent i.
\begin{equation}\label{Bconddist}
\begin{aligned}
&\Bb_n|\{\sigma_j=n\}\stackrel{\text{dist}}{=} \Aa_{j-1}+\Bb_{n-j},\ j\in[n-1],\ n\ge2;\\
&\Bb_n|\{\sigma_n=n\}\stackrel{\text{dist}}{=}\Bb_{n-1},\ n\ge 2,
\end{aligned}
\end{equation}
where  on the right hand side of \eqref{Bconddist}, $\Aa_{j-1}$ and $\Bb_{n-j}$ are understood to be independent.

\noindent ii.
\begin{equation}\label{Aconddist}
\begin{aligned}
&\Aa_n|\{\sigma_1=n\}\stackrel{\text{dist}}{=}\Aa_{n-1}, \ n\ge2;\\
&\Aa_n|\{\sigma_j=n\}\stackrel{\text{dist}}{=} \left(\Aa_{j-1}+\Aa_{n-j}\right)1_{\{\Aa_{n-j}\neq0\}}+\left(\Bb_{j-1}+1\right)1_{\{\Aa_{n-j}=0\}},\\
& j\in\{2,\cdots, n\},\ n\ge2,
\end{aligned}
\end{equation}
where
 on the right hand side of \eqref{Aconddist},   $\Aa_{j-1}$ and $\Aa_{n-j}$ are understood to be independent and $\Bb_{j-1}$ and $\Aa_{n-j}$  are understood to be independent.
\end{proposition}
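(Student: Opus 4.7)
The plan is to prove each of \eqref{Bconddist} and \eqref{Aconddist} as a pointwise identity on the event $\{\sigma_j=n\}$; the stated distributional identity then follows immediately from the fact (already recorded just after \eqref{nprob}) that, conditional on $\sigma_j=n$, the reduced left part $\text{red}(\sigma_1\cdots\sigma_{j-1})$ and the right part $\sigma_{j+1}\cdots\sigma_n$ are independent and uniformly distributed on $S_{j-1}(132)$ and $S_{n-j}(132)$.

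The key combinatorial input is a \emph{dichotomy}: for every $\sigma\in S_n(132)$ with $\sigma_j=n$, every longest subsequence of $\sigma$ realising a pattern of the form $(UUD)^k$ or $(UUD)^kU$ either contains the entry $n$, or is contained entirely in $\sigma_{j+1}\cdots\sigma_n$. The proof is a one-step swap. As recorded just before \eqref{nprob}, 132-avoidance with $\sigma_j=n$ forces every entry in the left block to strictly exceed every entry in the right block. Hence if an optimal subsequence $\sigma_{i_1},\ldots,\sigma_{i_k}$ contains entries from both blocks but omits $n$, the transition at the unique crossing index $a$ is $D$. In either pattern under consideration, every $D$ is the third letter of a $UUD$ block, so $a\ge 3$ and $t_{a-1}=U$. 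Replacing $\sigma_{i_a}$ by $n$ leaves $t_{a-1}=U$ (since $\sigma_{i_{a-1}}<n$) and $t_a=D$ (since $n>\sigma_{i_{a+1}}$), and alters no other transition, producing an equally long subsequence containing $n$.

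Given the dichotomy, part (i) follows by case analysis on where $n$ sits in an optimal $(UUD)^k$ subsequence. Because the transition into $n$ is always $U$ and out of $n$ is always $D$, $n$ can be neither the initial nor the terminal element of such a subsequence; when used it is interior, with an ``$A$-type'' left piece of pattern $(UUD)^{m-1}U$ and a ``$B$-type'' right piece of pattern $(UUD)^{k-m}$. Maximising gives the $n$-containing optimum $\Aa_{j-1}+\Bb_{n-j}$, which dominates the ``entirely to the right of $n$'' alternative $\Bb_{n-j}$; this is the first line of \eqref{Bconddist}. When $j=n$ the right side is empty, so $n$ can be neither interior nor terminal in a $B$-type subsequence, reducing the problem to the left part alone: $\Bb_n=\Bb_{n-1}$.

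The same programme gives \eqref{Aconddist} for the pattern $(UUD)^kU$. When $j=1$, placing $n$ at the front forces a leading $D$, incompatible with an $A$-type pattern, so $\Aa_n=\Aa_{n-1}$. For $j\ge 2$ there are two productive positions for $n$: interior (requires $A$-type pieces on both sides, giving $\Aa_{j-1}+\Aa_{n-j}$ and available only when $\Aa_{n-j}\ge 1$, so that an $A$-type right piece exists); or terminal (with a $B$-type left piece and the final $U$ running into $n$, giving $\Bb_{j-1}+1$). The ``no-$n$'' alternative contributes $\Aa_{n-j}$. The elementary inequalities $\Bb_m\le\Aa_m\le\Bb_m+1$ --- proved by truncating the last element of an optimal $A$-type $(UUD)^{A-1}U$ to get a $B$-type with $B=A-1$, and by taking the first $3B-1$ elements of an optimal $B$-type $(UUD)^B$ to get an $A$-type with $A=B$ --- together with the implication that $\Aa_m=0$ (i.e.\ the reduced permutation is decreasing) forces $\Bb_m=0$, make $\Aa_{j-1}+\Aa_{n-j}$ the maximum when $\Aa_{n-j}\ge 1$, and $\Bb_{j-1}+1$ the maximum when $\Aa_{n-j}=0$. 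The main obstacle of the entire proof is organising this case analysis cleanly; the only non-routine step is the 132-avoiding swap that underlies the dichotomy.
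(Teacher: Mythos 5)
Your proof is correct in substance and follows the same overall route as the paper: reduce the distributional identities to pointwise identities on the event $\{\sigma_j=n\}$ and invoke the conditional independence and distributions of $\text{red}(\sigma_1\cdots\sigma_{j-1})$ and $\sigma_{j+1}\cdots\sigma_n$. The difference is one of rigor rather than strategy: the paper explicitly declines to prove the pointwise identities \eqref{Bcondsig} and \eqref{Acondsig} formally, offering instead a worked example and a generic explanation, whereas you supply the two missing ingredients --- the swap argument showing that an optimal subsequence meeting the left block can be converted into an equally long one passing through $n$ (the ``variant of Stanley's observation'' the paper alludes to only in its introduction), and the inequalities $\Bb_m\le\Aa_m\le\Bb_m+1$ needed to see that the $n$-containing option dominates the alternatives. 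One correction: your dichotomy is overstated. As phrased (``every longest subsequence either contains $n$ or lies entirely in the right block'') it is false --- a longest subsequence may lie entirely in $\sigma_1\cdots\sigma_{j-1}$, and in the crossing case your swap produces a \emph{different} equally long subsequence through $n$ rather than showing the original one contains $n$. What the swap actually proves, and all you need, is the existential form: if some longest subsequence meets the left block, then some longest subsequence contains $n$; the left-block-only alternative is then dominated via $\Bb_{j-1}\le\Aa_{j-1}$, an inequality you prove anyway. With that rephrasing the argument is complete and, if anything, tighter than the paper's own exposition.
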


\begin{proof}
The first line of \eqref{Bconddist} follows from
the equality
\begin{equation}\label{Bcondsig}
\begin{aligned}
&\Bb_n(\sigma)= \Aa_{j-1}(\text{red}(\sigma_1\cdots\sigma_{j-1}))+\Bb_{n-j}(\sigma_{j+1}\cdots\sigma_n),\ \text{if}\ \sigma_j=n,\\
&\text{for}\ j\in[n-1], n\ge2,
\end{aligned}
\end{equation}
along with the fact noted above that under the conditioned  measure $P_n^{\text{av}(132)}|\{\sigma_j=n\}$, the permutation $\text{red}(\sigma_1\cdots\sigma_{j-1})\in S_{j-1}$ has the distribution
$P_{j-1}^{\text{av}(132)}$,  the permutation $\sigma_{j+1}\cdots\sigma_n\in S_{n-j}$ has the distribution $P_{n-j}^{\text{av}(132)}$, and these two permutations are independent.
Rather than give a formal proof of \eqref{Bcondsig}, we   convince the reader of its validity by giving an  example and then a generic explanation.

Let $\sigma=435768921$. Then $n=9$ and $j=7$.
We have $\Aa_{j-1}\left(\text{red}(\sigma_1\cdots\sigma_{j-1})\right)=\Aa_6\left(\text{red}(435768)\right)=\Aa(213546)=2$, because the subsequence  23546  (as well as 13546)
corresponds to $UUDU$. We have $\Bb_{n-j}(\sigma_{j+1}\cdots\sigma_n)=\Bb_2(21)=0$.
And we have $\Bb_n(\sigma)=\Bb_9(435768921)=2$ because  the subsequence 4576892 (as well as several others) corresponds to $UUDUUD$.

Generically, $\Bb_n(\sigma)$ is the sum of two terms. One of the terms is  $\Aa_{j-1}\left(\text{red}(\sigma_1\cdots\sigma_{j-1})\right)$, which counts  the  number of full sets of $UUD$ and then adds one for an extra  $U$.
This extra $U$, along with $\sigma_j=n$ and $\sigma_{j+1}$
supply an additional full set $UUD$ which was counted by $\Aa_{j-1}\left(\text{red}(\sigma_1\cdots\sigma_{j-1})\right)$ (via the adding one for the extra $U$).  The other term is  $\Bb_{n-j}(\sigma_{j+1}\cdots\sigma_n)$, which counts the remaining sets of $UUD$.

The second line of \eqref{Bconddist} is obtained  using the following rather obvious equality instead of \eqref{Bcondsig}:
$$
\Bb_n(\sigma)=\Bb_{n-1}(\sigma_1\cdots\sigma_{n-1}),\ \text{if}\ \sigma_n=n.
$$

The second line of \eqref{Aconddist}
follows from
the equality
\begin{equation}\label{Acondsig}
\begin{aligned}
&\Aa_n(\sigma)= \left(\Aa_{j-1}(\text{red}(\sigma_1\cdots\sigma_{j-1}))+\Aa_{n-j}(\sigma_{j+1}\cdots\sigma_n)\right)1_{\Aa_{n-j}(\sigma_{j+1}\cdots\sigma_n)\neq0}+\\
&\left(\Bb_{j-1}\left(\text{red}(\sigma_1\cdots\sigma_{j-1})\right)+1\right)1_{\Aa_{n-j}(\sigma_{j+1}\cdots\sigma_n)=0},
\ \text{if}\ \sigma_j=n,\ \text{for}\ j\in[n-1], n\ge2,
\end{aligned}
\end{equation}
along with the fact noted above that under the conditioned  measure $P_n^{\text{av}(132)}|\{\sigma_j=n\}$, the permutation $\text{red}(\sigma_1\cdots\sigma_{j-1})\in S_{j-1}$ has the distribution
$P_{j-1}^{\text{av}(132)}$,  the permutation $\sigma_{j+1}\cdots\sigma_n\in S_{n-j}$ has the distribution $P_{n-j}^{\text{av}(132)}$, and these two permutations are independent.

In the case that
$\Aa_{n-j}(\sigma_{j+1}\cdots\sigma_n)\neq0$, \eqref{Acondsig} is obtained by reasoning similar to that for \eqref{Bcondsig}. We explain \eqref{Acondsig}
in the  case that $\Aa_{n-j}(\sigma_{j+1}\cdots\sigma_n)=0$
with an example. Let $\sigma=435786921$ (slightly different than the $\sigma$ used above). So $n=9$ and $j=7$. We have $\Aa_{n-j}(\sigma_{j+1}\cdots\sigma_n)=\Aa_2(21)=0$.
We have $\Bb_{j-1}(\text{red}(\sigma_1\cdots\sigma_{j-1}))=\Bb_6(\text{red}(435786))=\Bb_6(213564)=1$ because the subsequence 2354 (as well as several  others) corresponds to $UUD$. And
we have $\Aa_n(\sigma)=\Aa_9(435786921)=2$ because  the subsequence 45769 (as well as several others)  corresponds to $UUDU$.
(Note that $\Aa_{j-1}(\text{red}(\sigma_1\cdots\sigma_{j-1}))=\Aa_6(\text{red}(435786))=\Aa_6(213564)=1$ because the subsequence 23 (as well as  several others) corresponds to $U$. Thus,
when $\Aa_{n-j}(\sigma_{j+1}\cdots\sigma_n)=0$,
it is not true in general that
$\Aa_n(\sigma)= \Aa_{j-1}(\text{red}(\sigma_1\cdots\sigma_{j-1}))+\Aa_{n-j}(\sigma_{j+1}\cdots\sigma_n)$.)

The first line of \eqref{Aconddist} is obtained  using the following rather obvious equality instead of \eqref{Acondsig}:
$$
\Aa_n(\sigma)=\Aa_{n-1}(\sigma_2\cdots\sigma_n),\ \text{if}\ \sigma_1=n.
$$

\end{proof}

From \eqref{nprob} and \eqref{Bconddist}, it follows that
\begin{equation}\label{bn}
\begin{aligned}
&b_n=E_n^{\text{av}(132)}\Bb_n=\sum_{j=1}^nE_n^{\text{av}(132)}(B^{U^2D}_n|\sigma_j=n)P_n^{\text{av}(132)}(\sigma_j=n)=\\
&\sum_{j=1}^{n-1}\left(E_{j-1}^{\text{av}(132)}A^{U^2D}_{j-1}+E_{n-j}^{\text{av}(132)}B^{U^2D}_{n-j}\right)\frac{C_{j-1}C_{n-j}}{C_n}+E_{n-1}^{\text{av}(132)}B^{U^2D}_{n-1}
\frac{C_{n-1}C_0}{C_n}=\\
&\sum_{j=1}^{n-1}\left(a_{j-1}+b_{n-j}\right)\frac{C_{j-1}C_{n-j}}{C_n}+b_{n-1}\frac{C_{n-1}C_0}{C_n},\ n\ge2.
\end{aligned}
\end{equation}
Multiplying both sides of  \eqref{bn} by $C_nt^n$, summing over $n$ from 4 to $\infty$, and using \eqref{ABbegin}, we obtain
\begin{equation}\label{Bequ}
\begin{aligned}
&\B(t)=\sum_{n=4}^\infty C_nb_nt^n=t\sum_{n=4}^\infty\left(\sum_{j=1}^{n-1}\left(a_{j-1}+b_{n-j}\right)C_{j-1}C_{n-j}\right)t^{n-1}+\\
&t\sum_{n=4}^\infty b_{n-1}C_{n-1}t^{n-1}.
\end{aligned}
\end{equation}
Straightforward algebraic  calculations along with \eqref{ABbegin} show that
\begin{equation}\label{double1}
\begin{aligned}
&\sum_{n=4}^\infty\left(\sum_{j=1}^{n-1}a_{j-1}C_{j-1}C_{n-j}\right)t^{n-1}=\A(t)\left(C(t)-1\right);\\
&\sum_{n=4}^\infty\left(\sum_{j=1}^{n-1}b_{n-j}C_{j-1}C_{n-j}\right)t^{n-1}=\B(t)C(t).
\end{aligned}
\end{equation}
From \eqref{Bequ} and \eqref{double1}, we obtain
$$
\B(t)=t\left(\A(t)\left(C(t)-1\right)+\B(t)C(t)+\B(t)\right),
$$
which we write as
\begin{equation}\label{finalB}
\B(t)=\frac{t\left(C(t)-1\right)\A(t)}{1-t-tC(t)}.
\end{equation}

Note that for $l\in\mathbb{N}$ and $\sigma\in S_l(132)$,  $\Aa_l(\sigma)=0$ only for  $\sigma=l\cdots 21$;
thus $P_l^{\text{av}(132)}(\Aa_l=0)=\frac1{C_l}$.
Using this with   \eqref{nprob} and \eqref{Aconddist}, it follows that
\begin{equation}\label{an}
\begin{aligned}
&a_n=E_n^{\text{av}(132)}\Aa_n=\sum_{j=1}^nE_n^{\text{av}(132)}(\A_n|\sigma_j=n)P_n^{\text{av}(132)}(\sigma_j=n)=\\
& \frac{C_0C_{n-1}}{C_n}a_{n-1}+  \sum_{j=2}^n\left(a_{j-1}\left(1-\frac1{C_{n-j}}\right)+a_{n-j}\right)\frac{C_{j-1}C_{n-j}}{C_n}+\\
&\sum_{j=2}^n\frac{b_{j-1}+1}{C_{n-j}}\thinspace\frac{C_{j-1}C_{n-j}}{C_n}.
\end{aligned}
\end{equation}
Multiplying both sides of  \eqref{an} by $C_nt^n$, summing over $n$ from 2 to $\infty$ and using \eqref{ABbegin}, we obtain
\begin{equation}\label{aequ}
\begin{aligned}
&\A(t)=\sum_{n=2}^\infty C_na_nt^n=\\
&t\sum_{n=2}^\infty C_{n-1}a_{n-1}t^{n-1}+t\sum_{n=2}^\infty\left(\sum_{j=2}^n a_{j-1}C_{j-1}C_{n-j}\right)t^{n-1}-
t\sum_{n=2}^\infty\big(\sum_{j=2}^n
a_{j-1}C_{j-1}\big)t^{n-1}+\\
&t\sum_{n=2}^\infty\left(\sum_{j=2}^nC_{j-1}a_{n-j}C_{n-j}\right)t^{n-1}+t\sum_{n=2}^\infty\left(\sum_{j=2}^nb_{j-1}C_{j-1}\right)t^{n-1}+t\sum_{n=2}^\infty\left(\sum_{j=2}^n
C_{j-1}\right)t^{n-1}.
\end{aligned}
\end{equation}
Straightforward algebraic calculations along with \eqref{ABbegin} show that
\begin{equation}\label{double2}
\begin{aligned}
&\sum_{n=2}^\infty\left(\sum_{j=2}^n a_{j-1}C_{j-1}C_{n-j}\right)t^{n-1}=\A(t)C(t);\\
&\sum_{n=2}^\infty\left(\sum_{j=2}^nC_{j-1}a_{n-j}C_{n-j}\right)t^{n-1}=\A(t)\left(C(t)-1\right);\\
&\sum_{n=2}^\infty\big(\sum_{j=2}^na_{j-1}C_{j-1}\big)t^{n-1}=\frac{\A(t)}{1-t};\\
&\sum_{n=2}^\infty\big(\sum_{j=2}^nb_{j-1}C_{j-1}\big)t^{n-1}=\frac{\B(t)}{1-t};\\
&\sum_{n=2}^\infty\big(\sum_{j=2}^nC_{j-1}\big)t^{n-1}=\frac{C(t)-1}{1-t}.
\end{aligned}
\end{equation}
From \eqref{aequ} and \eqref{double2}, we obtain
\begin{equation}\label{prefinalA}
\begin{aligned}
&\A(t)=t\Bigg(\A(t)+\A(t)C(t)-\frac{\A(t)}{1-t}+\A(t)\left(C(t)-1\right)+\\
&\frac{\B(t)}{1-t}+\frac{C(t)-1}{1-t}\Bigg),
\end{aligned}
\end{equation}
which we write as
\begin{equation}\label{finalA}
\A(t)=\frac{t\left(\B(t)+C(t)-1\right)}{(1-t)\left(1-2tC(t)\right)+t}.
\end{equation}

Substituting \eqref{finalA} in \eqref{finalB} and solving for $\B(t)$, we obtain
\begin{equation}\label{Balone}
\B(t)=\frac{t^2\left(C(t)-1\right)^2}{\left((1-t)(1-2tC(t))+t\right)\left(1-t-tC(t)\right)-t^2(C(t)-1)}.
\end{equation}
We write the  denominator in \eqref{Balone}
as
\begin{equation}\label{denomB}
\begin{aligned}
&\left((1-t)(1-2tC(t))+t\right)\left(1-t-tC(t)\right)-t^2(C(t)-1)=\\
&t^2-t+1+(-2t^3+3t^2-3t)C(t)+2t^2(1-t)C^2(t).
\end{aligned}
\end{equation}
Using \eqref{Catgenfunc} and performing some algebra, we have
%the  denominator in \eqref{Balone} can be written as
\begin{equation}\label{denomBagain}
\begin{aligned}
&t^2-t+1+(-2t^3+3t^2-3t)C(t)+2t^2(1-t)C^2(t)=\\
&\frac12\left((2t^2-t+1)\sqrt{1-4t}+(1-t)(1-4t)\right).
\end{aligned}
\end{equation}
Using \eqref{Catgenfunc}, the
 numerator in \eqref{Balone} can be written as.
\begin{equation}\label{numer}
\begin{aligned}
t^2(C(t)-1)^2=\frac12\left(2t^2-4t+1+(2t-1)\sqrt{1-4t}\right).
\end{aligned}
\end{equation}
From \eqref{Balone}-\eqref{numer}, we obtain
\begin{equation}\label{Baloneagain}
\B(t)=\frac{2t^2-4t+1+(2t-1)\sqrt{1-4t}}{(2t^2-t+1)\sqrt{1-4t}+(1-t)(1-4t)}.
\end{equation}
In order to eliminate the square root from the denominator, we
multiply the numerator and denominator on the right hand side of \eqref{Baloneagain} by the conjugate of the denominator,
$-(2t^2-t+1)\sqrt{1-4t}+(1-t)(1-4t)$.
Performing the algebra, the new denominator can be written as  $-4t(1-4t)(t^3-t+1)$.
Performing the algebra to calculate the new numerator, and dividing the new numerator and the new denominator by $t(1-4t)$, we obtain
\begin{equation}\label{BUD}
\B(t)=\frac{N(t)}{D(t)},
\end{equation}
where
\begin{equation}\label{U}
\begin{aligned}
&tN(t)=(2t^2-4t+1)(1-t)+(2t-1)(1-t)(1-4t)^\frac12\\
&-(2t^2-4t+1)(2t^2-t+1)(1-4t)^{-\frac12}-(2t-1)(2t^2-t+1)
\end{aligned}
\end{equation}
and
\begin{equation}\label{D}
D(t)=-4(t^3-t+1).
\end{equation}

For a function $f$ represented by a power series as $f(t)=\sum_{n=0}^\infty f_nt^n$, we use the notation $[t^n]f(t)=f_n$.
From \cite[p. 381]{FS}, we have
\begin{equation}\label{1-tpower}
[t^n]\left(1-4t\right)^{-\alpha}=
4^n\frac{n^{\alpha-1}}{\Gamma(\alpha)}\left(1+O\left(\frac1n\right)\right), \ \text{for}\
 \alpha\in \mathbb{C}-\mathbb{Z}_{\le0}.
\end{equation}
From the transfer theorem \cite[Theorem VI.3, p. 390, Example VI.2, p. 395]{FS} and \eqref{1-tpower}, it follows that
if  $g(t)$ is analytic in  a disk, centered at the origin, of radius larger than $\frac14$,
 then
\begin{equation}\label{transfer}
[t^n]g(t)\left(1-4t\right)^{-\alpha}=g(\frac14)4^n\frac{n^{\alpha-1}}{\Gamma(\alpha)}\left(1+O\left(\frac1n\right)\right), \ \text{for}\
 \alpha\in \mathbb{C}-\mathbb{Z}_{\le0}.
\end{equation}
From \eqref{transfer}
it follows immediately that
\begin{equation}\label{transferpower}
[t^n]g(t)t^b\left(1-4t\right)^{-\alpha}=g(\frac14)4^{n-b}\frac{n^{\alpha-1}}{\Gamma(\alpha)}\left(1+O\left(\frac1n\right)\right), \ \text{for}\ b\in\mathbb{Z}\ \text{and} \
 \alpha\in \mathbb{C}-\mathbb{Z}_{\le0}.
\end{equation}

Noting that all of the roots of $D(t)$  have absolute value greater than $\frac14$, and applying  \eqref{transferpower} with $\alpha\in\{\frac12,-\frac12\}$ and $g(t)=-\frac1{D(t)}$ to \eqref{BUD}-\eqref{D},
it follows that the leading order contribution to $[t^n]\B(t)$ as $n\to\infty$ comes from the term
$\frac{\frac1t(2t^2-4t+1)(2t^2-t+1)(1-4t)^{-\frac12}}{4(t^3-t+1)}=\frac{4t^3-10t^2+8t-5+t^{-1}}{4(t^3-t+1)}(1-4t)^{-\frac12}$.  Noting that
 $\Gamma(\frac12)=\sqrt\pi$ and $g(\frac14)=-\frac1{D(\frac14)}=\frac{16}{49}$
it follows from \eqref{transferpower} that
\begin{equation}\label{finalasympb}
[t^n]\B(t)\sim\frac{16}{49}4^n\frac{n^{-\frac12}}{\sqrt\pi}\left(4\cdot4^{-3}-10\cdot4^{-2}+8\cdot4^{-1}-5+4\right)=\frac174^n\frac{n^{-\frac12}}{\sqrt\pi}.
\end{equation}
From \eqref{genfuncsAB} and \eqref{anbn}, we have
$[t^n]\B(t)=C_nb_n=C_nE_n^{\text{av}(132)}B^{U^2D}_n$, and as is well known,
the Catalan numbers satisfy
$C_n\sim4^n\frac{n^{-\frac32}}{\sqrt\pi}$. Using these facts with \eqref{finalasympb}, we conclude that
\begin{equation}\label{finalexpB}
E_n^{\text{av}(132)}B^{U^2D}_n\sim \frac17n.
\end{equation}
Now Theorem \ref{thmUUD} follows from \eqref{finalexpB} and \eqref{BnLn}.
\hfill $\square$

\section{ Derivation of the generating functions for   Theorem \ref{thmUUUD}}\label{sec3}

For $\sigma\in S_n$ and $n\in\mathbb{N}$, define $B^{U^3D}_n(\sigma)$ to be the number of complete sets of $UUUD$ in a longest subsequence in $\sigma$ of the repeated pattern $UUUD$.
Thus,
%for the permutation
%$342617985\in S_9$, one has $B^{U^2D}_9(\sigma)=2$.
%In general,
\begin{equation}\label{BnLn3}
B^{U^3D}_n(\sigma)=\begin{cases}\frac14\left(L_n^{U^3D}(\sigma)-1\right), \ \text{if}\ L_n^{U^3D}(\sigma)\neq0;\\ 0, \text{if}\ L_n^{U^2D}(\sigma)=0.\end{cases}
\end{equation}
Also, for convenience, we define $B^{U^3D}_0\equiv0$.

For $\sigma\in S_n$ and $n\in\mathbb{N}$, define $G^{U^3D}_n(\sigma)=0$, if $\sigma=n\cdots 21$; otherwise,  find a longest   subsequence $\{\sigma_{i_j}\}_{j=1}^{4k+2}$, $k\in\mathbb{Z}^+$, of $\sigma$ for which the up/down pattern is $UUUD\cdots UUUDU$, and define
$G^{U^3D}_n(\sigma)=k+1$. For convenience, we define $G^{U^3D}_0\equiv0$.

For $\sigma\in S_n$ and $n\in\mathbb{N}$, define $A^{U^3D}_n(\sigma)=0$, if $\sigma$ has no increasing subsequence of length three (or equivalently, if $\sigma$ has no subsequence $\{\sigma_{i_j}\}_{j=1}^3$ which corresponds to the pattern $UU$); otherwise,  find a longest   subsequence $\{\sigma_{i_j}\}_{j=1}^{4k+3}$, $k\in\mathbb{Z}^+$, of $\sigma$ for which the up/down pattern is $UUUD\cdots UUUDUU$, and define
$A^{U^3D}_n(\sigma)=k+1$. For convenience, we define $A^{U^3D}_0\equiv0$.

 In the sequel, for $j\in\mathbb{N}$,  $B^{U^3D}_j, G^{U^3D}_j$  and $A^{U^3D}_j$ will always be considered as random variables on the probability space $\left(S_j(132),P_j^{\text{av}(132)}\right)$.
Define
\begin{equation}\label{anbngn}
\begin{aligned}
&b_n=E_n^{\text{av}(132)}B^{U^3D}_n;\
g_n=E_n^{\text{av}(132)}G^{U^3D}_n;\
a_n=E_n^{\text{av}(132)}A^{U^3D}_n,\
\end{aligned}
\end{equation}
where we have suppressed the notation $U^3D$.

Define the generating functions for $\{C_nb_n\}_{n=0}^\infty$,  $\{C_ng_n\}_{n=0}^\infty$ and $\{C_na_n\}_{n=0}^\infty$ by
\begin{equation}\label{genfuncsABG}
\begin{aligned}
&\mathcal{B}^{U^3D}(t)=\sum_{n=0}^\infty C_nb_nt^n;\\
&\mathcal{G}^{U^3D}(t)=\sum_{n=0}^\infty C_ng_nt^n;\\
&\mathcal{A}^{U^3D}(t)=\sum_{n=0}^\infty C_na_nt^n.\\
\end{aligned}
\end{equation}

We will derive a system of three linear equations for $\mathcal{B}^{U^3D}(t)$, $\mathcal{G}^{U^3D}(t)$ and $\mathcal{A}^{U^3D}(t)$
and then solve for one of them explicitly.
From the definitions of $B^{U^3D}_n$, $G^{U^3D}_n$ and $A^{U^3D}_n$, we have
$$
B^{U^3D}_n\equiv0,\ 0\le n\le 4;\ \ G^{U^3D}_n\equiv0,\ 0\le n\le 1; \ \ A^{U^3D}_n=0,\ 0\le n\le 2 .
$$
Thus,
\begin{equation}\label{ABGbegin}
\begin{aligned}
&b_n=0,\ 0\le n\le 4;\\
&g_n=0,\ 0\le n\le 1;\\
&a_n=0,\ 0\le n\le 2.
\end{aligned}
\end{equation}
The following proposition is the key to obtaining a set of three linear equations for the generating functions $\mathcal{B}^{U^3D}(t)$, $\mathcal{G}^{U^3D}(t)$ and $\mathcal{A}^{U^3D}(t)$.
\begin{proposition}\label{conddist3}
\noindent i.
\begin{equation}\label{Bconddist3}
\begin{aligned}
&B^{U^3D}_n|\{\sigma_j=n\}\stackrel{\text{dist}}{=} A^{U^3D}_{j-1}+B^{U^3D}_{n-j},\ j\in[n-1],\ n\ge2;\\
&B^{U^3D}_n|\{\sigma_n=n\}\stackrel{\text{dist}}{=}B^{U^3D}_{n-1},\ n\ge 2,
\end{aligned}
\end{equation}
where
 on the right hand side of \eqref{Bconddist3}, $A^{U^3D}_{j-1}$ and $B^{U^3D}_{n-j}$ are understood to be independent.

\noindent ii.
\begin{equation}\label{Gconddist3}
\begin{aligned}
&G^{U^3D}_n|\{\sigma_1=n\}\stackrel{\text{dist}}{=}G^{U^3D}_{n-1}, \ n\ge2;\\
&G^{U^3D}_n|\{\sigma_j=n\}\stackrel{\text{dist}}{=} \left(A^{U^3D}_{j-1}+G^{U^3D}_{n-j}\right)1_{\{G^{U^3D}_{n-j}\neq0\}}+\left(\Bb_{j-1}+1\right)1_{\{G^{U^3D}_{n-j}=0\}},\\
& j\in\{2,\cdots, n\},\ n\ge2,
\end{aligned}
\end{equation}
where
 on the right hand side of \eqref{Gconddist3}, $A^{U^3D}_{j-1}$ and $G^{U^3D}_{n-j}$ are understood to be independent and   $B^{U^3D}_{j-1}$ and $G^{U^3D}_{n-j}$  are understood to be independent.

\noindent iii.
\begin{equation}\label{Aconddist3}
\begin{aligned}
&A^{U^3D}_n|\{\sigma_1=n\}\stackrel{\text{dist}}{=}A^{U^3D}_{n-1}, \ n\ge2;\\
&A^{U^3D}_n|\{\sigma_j=n\}\stackrel{\text{dist}}{=} \left(A^{U^3D}_{j-1}+A^{U^3D}_{n-j}\right)1_{\{A^{U^3D}_{n-j}\neq0\}}+G^{U^3D}_{j-1}1_{\{A^{U^3D}_{n-j}=0\}},\\
& j\in\{2,\cdots, n\},\ n\ge2,
\end{aligned}
\end{equation}
where
on the right hand side of \eqref{Aconddist3}, $A^{U^3D}_{j-1}$ and $A^{U^3D}_{n-j}$ are understood to be independent and $G^{U^3D}_{j-1}$  and $A^{U^3D}_{n-j}$   are understood to be independent.
\end{proposition}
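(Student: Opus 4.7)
The plan is to follow the template used for Proposition \ref{conddist}: for each of the three parts, I would first establish a pointwise identity on individual permutations $\sigma\in S_n(132)$ with $\sigma_j=n$, in the spirit of \eqref{Bcondsig} and \eqref{Acondsig}, and then invoke the independence-and-distribution fact recalled just before Proposition \ref{conddist}, namely that under $P_n^{\text{av}(132)}|\{\sigma_j=n\}$ the reduced prefix $\text{red}(\sigma_1\cdots\sigma_{j-1})$ and the suffix $\sigma_{j+1}\cdots\sigma_n$ are independent with respective distributions $P_{j-1}^{\text{av}(132)}$ and $P_{n-j}^{\text{av}(132)}$. The structural input I would use throughout is that in a 132-avoiding permutation with $\sigma_j=n$, every entry of the prefix exceeds every entry of the suffix, so any subsequence transition that crosses the position of $n$ contributes $UD$ if it passes through $n$ and contributes $D$ if it does not.

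For Part i the pointwise identity to establish is $B^{U^3D}_n(\sigma)=A^{U^3D}_{j-1}(\text{red}(\sigma_1\cdots\sigma_{j-1}))+B^{U^3D}_{n-j}(\sigma_{j+1}\cdots\sigma_n)$, $j\in[n-1]$. The $\ge$ direction is by explicit concatenation: glue an optimal $A^{U^3D}$ subsequence on the left (whose pattern ends with the extra $UU$) with $n$ (supplying the third $U$ that closes the $UUU$ run) and with the first element of an optimal $B^{U^3D}$ subsequence on the right (supplying the $D$ that completes the $(k_L+1)$-th full $UUUD$ block); the remaining elements of the right-side optimum then contribute another $k_R$ full blocks, and the $+1$ built into $A^{U^3D}$ accounts for the block produced at the crossing. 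The constructions in Parts ii and iii are the same in spirit, but with the right-side subsequence replaced by an optimal $G^{U^3D}$ or $A^{U^3D}$ subsequence whenever the corresponding random variable is non-zero. When it is zero --- that is, when the suffix (reduced) equals the reverse identity and so admits no ascending subsequence of length $2$ (Part ii) or no ascending subsequence of length $3$ (Part iii) --- no $UUUD$ block can be closed across $n$, and the best available subsequence uses an optimal $B^{U^3D}$ (Part ii) or $G^{U^3D}$ (Part iii) subsequence on the left and appends $n$ as the single trailing ascent, accounting for the $B^{U^3D}_{j-1}+1$ and $G^{U^3D}_{j-1}$ terms, respectively.

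The main obstacle is the reverse inequality $\le$, which I would prove by a case analysis of any optimal subsequence according to (a) whether it uses $n$, and (b) whether it uses elements from the prefix only, from the suffix only, or from both. Using the fact that in the repeated $UUUD$ pattern the $D$'s occur only at positions congruent to $0\pmod 4$ (with the analogous positioning at the tail for the $G$- and $A$-patterns), together with the inequality that prefix entries exceed suffix entries, any left-to-right crossing in the subsequence is forced to occur precisely at a mandated $D$-position (or at a $UD$-pair involving $n$); this rigidly determines how many full blocks can fall on each side and hence bounds the total by the corresponding right-hand side. Tightness in the zero-indicator cases additionally needs the elementary inequalities $B^{U^3D}_l\le A^{U^3D}_l\le B^{U^3D}_l+1$ and $A^{U^3D}_l\le G^{U^3D}_l\le A^{U^3D}_l+1$, both of which follow by trimming one or two elements from an optimal subsequence. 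The edge case $\sigma_1=n$ in Parts ii and iii is immediate, since an initial $n$ cannot supply a $U$, so the optimal $G$- or $A$-pattern subsequence discards $\sigma_1$ and is realized in $\sigma_2\cdots\sigma_n$. As in the proof of Proposition \ref{conddist}, I would not attempt a fully formal combinatorial verification of each pointwise identity, but would instead illustrate the mechanism with small worked examples of the sort given there.
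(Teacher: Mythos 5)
Your proposal follows the same route as the paper: establish the pointwise identities \eqref{Bcondsig3}, \eqref{Gcondsig3}, \eqref{Acondsig3} on individual permutations with $\sigma_j=n$, then transfer them to distributional statements via the fact that under $P_n^{\text{av}(132)}|\{\sigma_j=n\}$ the reduced prefix and the suffix are independent with distributions $P_{j-1}^{\text{av}(132)}$ and $P_{n-j}^{\text{av}(132)}$, with the zero-indicator cases handled exactly as you describe. The paper likewise stops short of a fully formal verification of the pointwise identities, offering a generic gluing explanation and worked examples, so your level of detail (concatenation for $\ge$, crossing analysis for $\le$, examples in lieu of full formalization) matches its proof in substance.
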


\begin{proof}
The proof is similar to that of Proposition \ref{conddist}. The first line of \eqref{Bconddist3} and the second lines of \eqref{Gconddist3} and \eqref{Aconddist3} follow from the rather obvious equalities
$$
\begin{aligned}
&B^{U^3D}_n(\sigma)=B^{U^3D}_{n-1}(\sigma_1\cdots\sigma_{n-1}), \ \text{if}\ \sigma_n=n;\\
&G^{U^3D}_n(\sigma)=G^{U^3D}_{n-1}(\sigma_2\cdots\sigma_n), \ \text{if}\ \sigma_1=n;\\
&A^{U^3D}_n(\sigma)=A^{U^3D}_{n-1}(\sigma_2\cdots\sigma_n), \ \text{if}\ \sigma_1=n.
\end{aligned}
$$

Recall  the notation $\text{red}(\sigma)$ that was introduced in the paragraph containing \eqref{nprob}.
The first line of \eqref{Bconddist3} follows from the equality
\begin{equation}\label{Bcondsig3}
\begin{aligned}
&B^{U^3D}_n(\sigma)=A^{U^3D}_{j-1}(\text{red}(\sigma_1\cdots\sigma_{j-1}))+B^{U^3D}_{n-j}(\sigma_{j+1}\cdots\sigma_n), \ \text{if}\ \sigma_j=n,\\
& \text{for}\ j\in[n-1], n\ge2,
\end{aligned}
\end{equation}
along with the fact that under the conditioned  measure $P_n^{\text{av}(132)}|\{\sigma_j=n\}$, the permutation $\text{red}(\sigma_1\cdots\sigma_{j-1})\in S_{j-1}$ has the distribution
$P_{j-1}^{\text{av}(132)}$,  the permutation $\sigma_{j+1}\cdots\sigma_n\in S_{n-j}$ has the distribution $P_{n-j}^{\text{av}(132)}$, and these two permutations are independent.
The explanation for  \eqref{Bcondsig3}  is essentially the same as the explanation for \eqref{Bcondsig}.
Generically, $B^{U^3D}_n(\sigma)$ is the sum of two terms. One of the terms is  $A^{U^3D}_{j-1}\left(\text{red}(\sigma_1\cdots\sigma_{j-1})\right)$, which counts  the  number of full sets of $U^3D$ and then adds one for an extra  $UU$.
This extra $UU$, along with $\sigma_j=n$ and $\sigma_{j+1}$
supply an additional full set $U^3D$ which was counted by $A^{U^3D}_{j-1}\left(\text{red}(\sigma_1\cdots\sigma_{j-1})\right)$ (via the adding one for the extra $UU$).  The other term is  $B^{U^3D}_{n-j}(\sigma_{j+1}\cdots\sigma_n)$, which counts the remaining sets of $U^3D$.

The second line of \eqref{Gconddist3} follows from the equality
\begin{equation}\label{Gcondsig3}
\begin{aligned}
&G^{U^3D}_n(\sigma)= \left(A^{U^3D}_{j-1}(\text{red}(\sigma_1\cdots\sigma_{j-1}))+G^{U^3D}_{n-j}(\sigma_{j+1}\cdots\sigma_n)\right)1_{G^{U^3D}_{n-j}(\sigma_{j+1}\cdots\sigma_n)\neq0}+\\
&\left(\Bb_{j-1}\left(\text{red}(\sigma_1\cdots\sigma_{j-1})\right)+1\right)1_{G^{U^3D}_{n-j}(\sigma_{j+1}\cdots\sigma_n)=0},\\
&\ \text{if}\ \sigma_j=n,\ \text{for}\ j\in[n-1], n\ge2,
\end{aligned}
\end{equation}
along with the fact that under the conditioned  measure $P_n^{\text{av}(132)}|\{\sigma_j=n\}$, the permutation $\text{red}(\sigma_1\cdots\sigma_{j-1})\in S_{j-1}$ has the distribution
$P_{j-1}^{\text{av}(132)}$,  the permutation $\sigma_{j+1}\cdots\sigma_n\in S_{n-j}$ has the distribution $P_{n-j}^{\text{av}(132)}$, and these two permutations are independent.
The explanation for \eqref{Gcondsig3} in the case that $G^{U^3D}_{n-j}(\sigma_{j+1}\cdots\sigma_n)\neq0$ is  similar to the  reasoning for \eqref{Bcondsig3}.
We explain \eqref{Gcondsig3} in the case that $G^{U^3D}_{n-j}(\sigma_{j+1}\cdots\sigma_n)=0$ with an example, the same example used to explain
\eqref{Acondsig}
in the  case that $\Aa_{n-j}(\sigma_{j+1}\cdots\sigma_n)=0$. Consider $\sigma=435786921$.
So $n=9$ and $j=7$. We have $G^{U^3D}_{n-j}(\sigma_{j+1}\cdots\sigma_n)=G^{U^3D}_2(21)=0$.
We have $B^{U^3D}_{j-1}(\text{red}(\sigma_1\cdots\sigma_{j-1}))=B^{U^3D}_6(\text{red}(435786))=B^{U^3D}_6(213564)=1$ because the subsequence 23564 (as well as 13564) corresponds to $U^3D$. And
we have $G^{U^3D}_n(\sigma)=G^{U^3D}_9(435786921)=2$ because  the subsequence 457869 (as well as 357869)  corresponds to $U^3DU$.
(Note that $A^{U^3D}_{j-1}(\text{red}(\sigma_1\cdots\sigma_{j-1}))=A^{U^3D}_6(\text{red}(435786))=A^{U^3D}_6(213564)=1$ because the subsequence 235 (as well as  several others) corresponds to $UU$. Thus,
when $G^{U^3D}_{n-j}(\sigma_{j+1}\cdots\sigma_n)=0$,
it is not true in general that
$G^{U^3D}_n(\sigma)= A^{U^3D}_{j-1}(\text{red}(\sigma_1\cdots\sigma_{j-1}))+G^{U^3D}_{n-j}(\sigma_{j+1}\cdots\sigma_n)$.)

The second line of \eqref{Aconddist3} follows from the equality
\begin{equation}\label{Acondsig3}
\begin{aligned}
&A^{U^3D}_n(\sigma)= \left(A^{U^3D}_{j-1}(\text{red}(\sigma_1\cdots\sigma_{j-1}))+A^{U^3D}_{n-j}(\sigma_{j+1}\cdots\sigma_n)\right)1_{A^{U^3D}_{n-j}(\sigma_{j+1}\cdots\sigma_n)\neq0}+\\
&\left(G^{U^3D}_{j-1}\left(\text{red}(\sigma_1\cdots\sigma_{j-1})\right)\right)1_{A^{U^3D}_{n-j}(\sigma_{j+1}\cdots\sigma_n)=0},\\
&\ \text{if}\ \sigma_j=n,\ \text{for}\ j\in[n-1], n\ge2,
\end{aligned}
\end{equation}
along with the fact that under the conditioned  measure $P_n^{\text{av}(132)}|\{\sigma_j=n\}$, the permutation $\text{red}(\sigma_1\cdots\sigma_{j-1})\in S_{j-1}$ has the distribution
$P_{j-1}^{\text{av}(132)}$,  the permutation $\sigma_{j+1}\cdots\sigma_n\in S_{n-j}$ has the distribution $P_{n-j}^{\text{av}(132)}$, and these two permutations are independent.
The explanation for \eqref{Acondsig3} in the case that $A^{U^3D}_{n-j}(\sigma_{j+1}\cdots\sigma_n)\neq0$
is similar to the  reasoning for \eqref{Bcondsig3}.
We explain \eqref{Acondsig3} in the case that $A^{U^3D}_{n-j}(\sigma_{j+1}\cdots\sigma_n)=0$ with an example.
Consider $\sigma=786543921$. So $n=9$ and $j=7$. We have $A^{U^3D}_{n-j}(\sigma_{j+1}\cdots\sigma_n)=A^{U^3D}_2(21)=0$.
 We have $G^{U^3D}_{j-1}(\text{red}(\sigma_1\cdots\sigma_{j-1}))=G^{U^3D}_6(\text{red}(786543))=G^{U^3D}_6(564321)=1$ because the subsequence 56 corresponds to $U$.
And we have $A^{U^3D}_n(\sigma)=A^{U^3D}_9(786543921)=1$ because the subsequence 789 corresponds to $UU$.
(Note that $A^{U^3D}_{j-1}(\text{red}(\sigma_1\cdots\sigma_{j-1}))=A^{U^3D}_6(\text{red}(786543))=A^{U^3D}_6(564321)=0$.
Thus, the equality  $A^{U^3D}_n(\sigma)= A^{U^3D}_{j-1}(\text{red}(\sigma_1\cdots\sigma_{j-1}))+A^{U^3D}_{n-j}(\sigma_{j+1}\cdots\sigma_n)$ is not true in general when
 $A^{U^3D}_{n-j}(\sigma_{j+1}\cdots\sigma_n)=0$.)
\end{proof}

We now use Proposition \ref{conddist3} to derive a system of three linear equations for
$\mathcal{B}^{U^3D}(t)$, $\mathcal{G}^{U^3D}(t)$ and $\mathcal{A}^{U^3D}(t)$.
Note from \eqref{Bconddist} and \eqref{Bconddist3} that the conditional distributions of $\Bb(t)$ and $B^{U^3D}(t)$ are exactly the same except that
$\Aa(t)$  in \eqref{Bconddist} is replaced by   $A^{U^3D}(t)$ in \eqref{Bconddist3}. Thus, it follows from \eqref{finalB} that
\begin{equation}\label{finalB3}
\mathcal{B}^{U^3D}(t)=\frac{t\left(C(t)-1\right)\mathcal{A}^{U^3D}(t)}{1-t-tC(t)}.
\end{equation}

We now turn to $\mathcal{G}^{U^3D}(t)$.
Note that for $l\in\mathbb{N}$ and $\sigma\in S_l(132)$,  $G^{U^3D}_l(\sigma)=0$ only for  $\sigma=l\cdots 21$;
thus $P_l^{\text{av}(132)}(G^{U^3D}_l=0)=\frac1{C_l}$.
Using this with   \eqref{nprob} and \eqref{Gconddist3}, it follows that
\begin{equation}\label{gn}
\begin{aligned}
&g_n=E_n^{\text{av}(132)}G^{U^3D}_n=
\sum_{j=1}^nE_n^{\text{av}(132)}(G^{U^3D}_n|\sigma_j=n)P_n^{\text{av}(132)}(\sigma_j=n)=
\frac{C_0C_{n-1}}{C_n}g_{n-1}+\\
& \sum_{j=2}^n\left(a_{j-1}\left(1-\frac1{C_{n-j}}\right)+g_{n-j}\right)\frac{C_{j-1}C_{n-j}}{C_n}+
\sum_{j=2}^n\frac{b_{j-1}+1}{C_{n-j}}\thinspace\frac{C_{j-1}C_{n-j}}{C_n}.
\end{aligned}
\end{equation}
Multiplying both sides of  \eqref{gn} by $C_nt^n$, summing over $n$ from 2 to $\infty$ and using \eqref{ABGbegin}, we obtain
\begin{equation}\label{gequ}
\begin{aligned}
&\mathcal{G}^{U^3D}(t)=\sum_{n=2}^\infty C_ng_nt^n=\\
&t\sum_{n=2}^\infty C_{n-1}g_{n-1}t^{n-1}+t\sum_{n=2}^\infty\left(\sum_{j=2}^n a_{j-1}C_{j-1}C_{n-j}\right)t^{n-1}-
t\sum_{n=2}^\infty\big(\sum_{j=2}^n
a_{j-1}C_{j-1}\big)t^{n-1}+\\
&t\sum_{n=2}^\infty\left(\sum_{j=2}^nC_{j-1}g   _{n-j}C_{n-j}\right)t^{n-1}+t\sum_{n=2}^\infty\left(\sum_{j=2}^nb_{j-1}C_{j-1}\right)t^{n-1}+t\sum_{n=2}^\infty\left(\sum_{j=2}^n
C_{j-1}\right)t^{n-1}.
\end{aligned}
\end{equation}
Considerations almost identical to those that led from \eqref{aequ} to \eqref{prefinalA} yield
\begin{equation}\label{prefinalG}
\begin{aligned}
&\mathcal{G}^{U^3D}(t)(t)=t\Bigg(\mathcal{G}^{U^3D}(t)(t)+\mathcal{A}^{U^3D}(t)(t)C(t)-\frac{\mathcal{A}^{U^3D}(t)(t)}{1-t}+\mathcal{G}^{U^3D}(t)(t)\left(C(t)-1\right)+\\
&\frac{\mathcal{B}^{U^3D}(t)}{1-t}+\frac{C(t)-1}{1-t}\Bigg),
\end{aligned}
\end{equation}
which we write as
\begin{equation}\label{finalG}
\mathcal{G}^{U^3D}(t)(t)=\frac{\left(tC(t)-\frac t{1-t}\right)\mathcal{A}^{U^3D}(t)+\frac t{1-t}\mathcal{B}^{U^3D}(t)+\frac t{1-t}\left(C(t)-1\right)}{1-tC(t)}.
\end{equation}

We now turn to $\mathcal{A}^{U^3D}(t)$.
We need the following lemma.
\begin{lemma}\label{probA=0lemma}
\begin{equation}\label{probA=0}
P_l^{\text{av}(132)}(A^{U^3D}_l=0)=\frac{2^{l-1}}{C_l},\ l\in\mathbb{N}.
\end{equation}
\end{lemma}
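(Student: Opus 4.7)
My plan is to reduce the probability statement to a pure enumeration: namely, I observe that by the very definition of $A^{U^3D}_l$, one has $A^{U^3D}_l(\sigma)=0$ if and only if $\sigma$ has no increasing subsequence of length three, i.e.\ if and only if $\sigma$ avoids the pattern $123$. Therefore
\[
P_l^{\text{av}(132)}(A^{U^3D}_l=0)=\frac{|S_l(132)\cap S_l(123)|}{|S_l(132)|}=\frac{|S_l(132,123)|}{C_l},
\]
so the lemma reduces to the classical Simion--Schmidt identity $|S_l(132,123)|=2^{l-1}$ for $l\ge1$.

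To keep the argument self-contained and aligned with the techniques already in the paper, I would prove this identity by the same ``position of $n$'' decomposition used throughout Section \ref{sec3}. Set $f(l)=|S_l(132,123)|$, with the convention $f(0)=1$. Fix $\sigma\in S_l(132)$ and let $j$ be the position of $n=l$. Recall from the paragraph containing \eqref{nprob} that the first $j-1$ entries of $\sigma$ are a permutation of $\{l-j+1,\ldots,l-1\}$ and the last $l-j$ entries are a permutation of $\{1,\ldots,l-j\}$; moreover the two reduced factors are independent $132$-avoiding permutations. Assume in addition that $\sigma$ avoids $123$. Then any two entries among the first $j-1$ positions that appear in increasing order would, together with $n$, form a $123$ pattern, so the first block must be arranged in strictly decreasing order; conversely any such choice is automatically $123$-avoiding in its own block. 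No $123$ pattern can straddle position $j$ except one contained entirely in the first $j-1$ positions together with $n$, because every entry of the first block exceeds every entry of the last block. Hence the first block is forced (one arrangement) and the last block ranges freely over $S_{l-j}(132,123)$, giving
\[
f(l)=\sum_{j=1}^{l}f(l-j)=\sum_{k=0}^{l-1}f(k),\qquad l\ge1.
\]

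Comparing this identity for $l$ and $l-1$ yields $f(l)=2f(l-1)$ for $l\ge2$, and with $f(1)=f(0)=1$ the closed form $f(l)=2^{l-1}$ follows for all $l\in\mathbb{N}$. Dividing by $C_l$ produces \eqref{probA=0}. There is no serious obstacle; the only point that requires care is the case analysis ruling out $123$ patterns that straddle the position of $n$, which is what forces the first block to be decreasing and thus produces the factor $1$ (rather than something larger) in the recursion.
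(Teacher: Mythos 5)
Your proof is correct and follows essentially the same route as the paper: both identify the event $\{A^{U^3D}_l=0\}$ with the absence of an increasing subsequence of length three, decompose on the position of the maximal entry $l$, observe that the block to its left must be strictly decreasing while the block to its right must itself have no increasing triple, and arrive at the recursion $f(l)=\sum_{k=0}^{l-1}f(k)$. The only differences are presentational: you phrase the argument as a count of $S_l(132,123)$ (recovering the Simion--Schmidt identity $2^{l-1}$) and solve the recursion by differencing consecutive terms, whereas the paper works directly with $\gamma_l=P_l^{\text{av}(132)}(A^{U^3D}_l=0)$ and solves the identical recursion for $k_l=C_l\gamma_l$ via the generating function $K(t)=\frac{1-t}{1-2t}$.
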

\begin{proof}
For convenience, define
\begin{equation}\label{gammaprob}
\begin{aligned}
&\gamma_l=P_l^{\text{av}(132)}(A^{U^3D}_l=0),\ l\in\mathbb{N};\\
& \gamma_0=1.
\end{aligned}
\end{equation}
For $\sigma\in S_l$, distributed as  $P_l^{\text{av}(132)}$, and  conditioned on $\sigma_i=l$, the permutations $\text{red}(\sigma_1\cdots\sigma_{i-1})$ and $\sigma_{i+1}\cdots\sigma_l$ are independent and distributed
respectively as  $P_{i-1}^{\text{av}(132)}$ and $P_{l-i}^{\text{av}(132)}$. If $\sigma_i=l$,  then $A^{U^3D}_l(\sigma)=0$ if and only if
$G^{U^3D}_{i-1}(\text{red}(\sigma_1\cdots\sigma_{i-1}))=0$ and $A^{U^3D}_{l-i}(\sigma_{i+1}\cdots\sigma_l)=0$. Now
$G^{U^3D}_{i-1}(\text{red}(\sigma_1\cdots\sigma_{i-1}))=0$ if and only if
$\text{red}(\sigma_1\cdots\sigma_{i-1})=i-1\cdots 21$. Thus, $P_{i-1}^{\text{av}(132)}\left(G^{U^3D}_{i-1}(\text{red}(\sigma_1\cdots\sigma_{i-1}))=0\right)=\frac1{C_{i-1}}$.
Therefore, we have
$$
P_l^{\text{av}(132)}\left(A^{U^3D}_l(\sigma)=0|\sigma_i=l\right)=\frac{\gamma_{l-i}}{C_{i-1}}.
$$
Consequently,
\begin{equation*}
\gamma_l=P_l^{\text{av}(132)}(A^{U^3D}_l=0)=\sum_{i=1}^l\frac{C_{i-1}C_{l-i}}{C_l}\frac{\gamma_{l-i}}{C_{i-1}}=\sum_{i=1}^l\frac{C_{l-i}\gamma_{l-i}}{C_l},
\end{equation*}
which we write as
\begin{equation}\label{krecur}
k_l=\sum_{i=0}^{l-1}k_i,\ \ k_i=C_i\gamma_i.
\end{equation}
Multiply both sides of  \eqref{krecur} by $t^l$ and write the resulting equation as
\begin{equation}\label{krecuragain}
k_lt^l=t\sum_{i=0}^{l-1}k_it^it^{l-1-i}.
\end{equation}
Let $K(t)=\sum_{l=0}^\infty k_lt^l$. Summing \eqref{krecuragain}  over $l$ from 1 to $\infty$, one obtains after some algebra
$$
K(t)=1+\frac{tK(t)}{1-t},
$$
which yields
\begin{equation}\label{Kgen}
K(t)=\frac{1-t}{1-2t}=1+\frac t{1-2t}=1+\sum_{l=1}^\infty 2^{l-1}t^l.
\end{equation}
Thus, $C_l\gamma_l=k_l=2^{l-1},\ l\ge1$. Consequently
$P_l^{\text{av}(132)}(A^{U^3D}_l=0)=\gamma_l=\frac{2^{l-1}}{C_l}$.
\end{proof}

Using \eqref{gammaprob}  with   \eqref{nprob} and \eqref{Aconddist3}, it follows that
\begin{equation}\label{an3}
\begin{aligned}
&a_n=E_n^{\text{av}(132)}A^{U^3D}_n=\sum_{j=1}^nE_n^{\text{av}(132)}(A^{U^3D}_n|\sigma_j=n)P_n^{\text{av}(132)}(\sigma_j=n)=\\
& \frac{C_0C_{n-1}}{C_n}a_{n-1}+  \sum_{j=2}^n\left(a_{j-1}\left(1-\gamma_{n-j}\right)+a_{n-j}\right)\frac{C_{j-1}C_{n-j}}{C_n}+\\
&\sum_{j=2}^ng_{j-1}\gamma_{n-j}\thinspace\frac{C_{j-1}C_{n-j}}{C_n}.
\end{aligned}
\end{equation}
Multiplying both sides of \eqref{an3} by $C_nt^n$ and summing over $n$ from 2 to $\infty$, and recalling \eqref{ABGbegin}, we obtain
\begin{equation}\label{aequ3}
\begin{aligned}
&\mathcal{A}^{U^3D}(t)=\sum_{n=2}^\infty C_na_nt^n=t\sum_{n=2}^\infty C_{n-1}a_{n-1}t^{n-1}+t\sum_{n=2}^\infty\left(\sum_{j=2}^nC_{j-1}a_{j-1}C_{n-j}\right)t^{n-1}-\\
&t\sum_{n=2}^{\infty}\left(\sum_{j=2}^nC_{j-1}a_{j-1}\gamma_{n-j}C_{n-j}\right)t^{n-1}+t\sum_{n=2}^\infty\left(\sum_{j=2}^nC_{j-1}C_{n-j}a_{n-j}\right)t^{n-1}+\\
&t\sum_{n=2}^{\infty}\left(\sum_{j=2}^nC_{j-1}g_{j-1}\gamma_{n-j}C_{n-j}\right)t^{n-1}.
\end{aligned}
\end{equation}
By \eqref{krecur}, $\gamma_{n-j}C_{n-j}=k_{n-j}$. Using this with \eqref{Kgen}, we have
\begin{equation}\label{productgen}
\begin{aligned}
&\sum_{n=2}^{\infty}\left(\sum_{j=2}^nC_{j-1}a_{j-1}\gamma_{n-j}C_{n-j}\right)t^{n-1}=
\sum_{n=2}^{\infty}\left(\sum_{j=2}^nC_{j-1}a_{j-1}k_{n-j}\right)t^{n-1}=\\
&K(t)\mathcal{A}^{U^3D}(t)=\frac{1-t}{1-2t}\mathcal{A}^{U^3D}(t);\\
&\sum_{n=2}^{\infty}\left(\sum_{j=2}^nC_{j-1}g_{j-1}\gamma_{n-j}C_{n-j}\right)t^{n-1}=\sum_{n=2}^{\infty}\left(\sum_{j=2}^nC_{j-1}g_{j-1}k_{n-j}\right)t^{n-1}=\\
&K(t)\mathcal{G}^{U^3D}(t)=\frac{1-t}{1-2t}\mathcal{G}^{U^3D}(t).
\end{aligned}
\end{equation}
The two terms on the left hand sides of \eqref{productgen} appear on the right hand side of \eqref{aequ3}. The other terms on the right hand side of \eqref{aequ3} can be treated
via straightforward algebraic calculations,  similar to what was done in  previous calculations.
This allows for \eqref{aequ3} to be written term by term as
\begin{equation*}\label{prefinalA3}
\begin{aligned}
&\mathcal{A}^{U^3D}(t)=t\mathcal{A}^{U^3D}(t)+t\mathcal{A}^{U^3D}(t)C(t)-\frac{1-t}{1-2t}\mathcal{A}^{U^3D}(t)+\\
&t\mathcal{A}^{U^3D}(t)\left(C(t)-1\right)
+\frac{1-t}{1-2t}\mathcal{G}^{U^3D}(t),
\end{aligned}
\end{equation*}
which yields
\begin{equation}\label{finalA3}
\mathcal{A}^{U^3D}(t)=\frac{\frac{t(1-t)}{1-2t}\mathcal{G}^{U^3D}(t)}{1-2tC(t)+\frac{t(1-t)}{1-2t}}.
\end{equation}

Now \eqref{finalB3}, \eqref{finalG} and \eqref{finalA3} provide a system of three linear equations for the three generating functions
$\mathcal{B}^{U^3D}(t), \mathcal{G}^{U^3D}(t)$ and $\mathcal{A}^{U^3D}(t)$.
Since
$G^{U^3D}_n(\sigma), A^{U^3D}_n(\sigma)\in\{B^{U^3D}_n(\sigma)-1,B^{U^3D}_n(\sigma),B^{U^3D}_n(\sigma)+1\}$, for all $n\in\mathbb{N}$ and all $\sigma\in S_n$,
 the leading order asymptotic behavior is  the same for
$E_n^{\text{av}(132)}B^{U^3D}_n, E_n^{\text{av}(132)}G^{U^3D}_n$ and $E_n^{\text{av}(132)}A^{U^3D}_n$.
Thus, it doesn't matter which of the generating functions we solve for.
  We will solve for
$\mathcal{G}^{U^3D}(t)$.
We start with \eqref{finalG}, and replace the term $\mathcal{B}^{U^3D}(t)$
on the right hand side of \eqref{finalG} with the right hand side of \eqref{finalB3}. After rearranging some terms, this gives
\begin{equation}\label{GAremain}
\begin{aligned}
&\mathcal{G}^{U^3D}(t)=\frac{t(C(t)-1)}{(1-t)\left(1-tC(t)\right)}+\\
&\left(tC(t)-\frac t{1-t}+\frac{t^2(C(t)-1)}{(1-t)\left(1-t-tC(t)\right)}\right)\frac1{1-tC(t)}\mathcal{A}^{U^3D}(t).
\end{aligned}
\end{equation}
Now we replace $\mathcal{A}^{U^3D}(t)$ on the right hand side of \eqref{GAremain} with the right hand side of \eqref{finalA3}.
This yields an equation in which only the generating function  $\mathcal{G}^{U^3D}(t)$ appears. Solving for $\mathcal{G}^{U^3D}(t)$, we obtain
\begin{equation}\label{Galone}
\begin{aligned}
&\mathcal{G}^{U^3D}(t)=\frac{t(C(t)-1)}{(1-t)\left(1-tC(t)\right)(1-d_1(t))},\ \text{where}\\
&d_1(t)=\left(tC(t)-\frac t{1-t}+\frac{t^2(C(t)-1)}{(1-t)(1-t-tC(t))}\right)
\left(\frac1{1-tC(t)}\right)
\left(\frac{t(1-t)}{1-2t}\right)\times\\
&\left(\frac1{1-2tC(t)+\frac{t(1-t)}{1-2t}}\right).
\end{aligned}
\end{equation}

\section{ Completion of the proof of   Theorem \ref{thmUUUD}}\label{sec4}

In \eqref{Galone},
when we perform the multiplication $(1-t)\left(1-tC(t)\right)d_1(t)$,
 the second of the four factors in $d_1(t)$ will disappear, and the $1-t$ in the denominator of two of the terms in the first factor will also disappear. We obtain
 \begin{equation}\label{mult}
 \begin{aligned}
& (1-t)\left(1-tC(t)\right)d_1(t)=\left((1-t)tC(t)-t+\frac{t^2(C(t)-1)}{1-t-tC(t)}\right)\left(\frac{t(1-t)}{1-2t}\right)\times\\
&\left(\frac1{1-2tC(t)+\frac{t(1-t)}{1-2t}}\right).
 \end{aligned}
 \end{equation}
 Multiplying  the denominators of the   second and third factors on the right hand side of \eqref{mult}, we have
 \begin{equation}\label{mult2}
 (1-2t)\left(1-2tC(t)+\frac{t(1-t)}{1-2t}\right)=1-t-t^2-2tC(t)+4t^2C(t).
 \end{equation}
 Thus, multiplying both the numerator and the denominator on
 the right hand side of \eqref{Galone} by $1-t-t^2-2tC(t)+4t^2C(t)$, and using \eqref{mult} and \eqref{mult2}, we obtain
\begin{equation}\label{Galone2}
\mathcal{G}^{U^3D}(t)=\frac{t(C(t)-1)\left(1-t-t^2-2tC(t)+4t^2C(t)\right)}{d_2(t)},
\end{equation}
where
\begin{equation}\label{d2}
\begin{aligned}
&d_2(t)=(1-t)(1-tC(t))\left(1-t-t^2-2tC(t)+4t^2C(t)\right)-\\
&\left((1-t)tC(t)-t+\frac{t^2(C(t)-1)}{1-t-tC(t)}\right)t(1-t).
\end{aligned}
\end{equation}
Multiplying the numerator and the denominator  on the right hand side of \eqref{Galone2} by $1-t-tC(t)$, and using \eqref{d2}, we obtain
\begin{equation}\label{Galone3}
\mathcal{G}^{U^3D}(t)=\frac{n(t)}{d(t)},
\end{equation}
where
\begin{equation}\label{nd}
\begin{aligned}
&n(t)=t(C(t)-1)\left(1-t-t^2-2tC(t)+4t^2C(t)\right)\left(1-t-tC(t)\right);\\
&d(t)=(1-t)(1-tC(t))\left(1-t-t^2-2tC(t)+4t^2C(t)\right)\left(1-t-tC(t)\right)-\\
&\Big(\left((1-t)tC(t)-t\right)\left(1-t-tC(t)\right)+t^2(C(t)-1)\Big)t(1-t).
\end{aligned}
\end{equation}

Grouping powers of $C(t)$, we have
\begin{equation}\label{nd2nd}
\begin{aligned}
&n(t)=A_3(t)C^3(t)+A_2(t)C^2(t)+A_1(t)C(t)+A_0(t);\\
&d(t)=B_3(t)C^3(t)+B_2(t)C^2(t)+B_1(t)C(t)+B_0(t),
\end{aligned}
\end{equation}
where
\begin{equation}\label{ABcoef}
\begin{aligned}
&A_3(t)=2t^3-4t^2;\ \ A_2(t)=t^4+5t^3-3t^2;\\
& A_1(t)=4t^4-7t^3+t^2+t;\ \ A_0(t)=-t^4+2t^2-t;\\
&B_3(t)=-4t^5+6t^4-2t^3;\ \ B_2(t)=-2t^5+12t^4-15t^3+5t^2;\\
&B_1(t)=2t^5+t^4-11t^3+12t^2-4t;\ \ B_0(t)=-t^4+3t^2-3t+1.
\end{aligned}
\end{equation}
Recalling the formula for $C(t)$ in \eqref{Catgenfunc}, we have
\begin{equation}\label{C23}
\begin{aligned}
&C^2(t)=\frac{1-2t-\sqrt{1-4t}}{2t^2};\\
&C^3(t)=\frac{1-3t-(1-t)\sqrt{1-4t}}{2t^3}.
\end{aligned}
\end{equation}
Letting
$$
R:=\sqrt{1-4t}
$$
and substituting from \eqref{Catgenfunc} and \eqref{C23} in \eqref{nd2nd}, we obtain after a lot of algebra
\begin{equation}\label{nd3rd}
\begin{aligned}
&n(t)=\left(-\frac{(1-t)A_3(t)}{2t^3}-\frac{A_2(t)}{2t^2}-\frac{A_1(t)}{2t}\right)R+\\
&\frac{(1-3t)A_3(t)}{2t^3}+\frac{(1-2t)A_2(t)}{2t^2}+\frac{A_1(t)}{2t}+A_0(t);\\
&d(t)=\left(-\frac{(1-t)B_3(t)}{2t^3}-\frac{B_2(t)}{2t^2}-\frac{B_1(t)}{2t}\right)R+\\
&\frac{(1-3t)B_3(t)}{2t^3}+\frac{(1-2t)B_2(t)}{2t^2}+\frac{B_1(t)}{2t}+B_0(t).
\end{aligned}
\end{equation}
Using \eqref{ABcoef}, one finds that
\begin{equation}\label{AsBs}
\begin{aligned}
&-\frac{(1-t)A_3(t)}{2t^3}-\frac{A_2(t)}{2t^2}-\frac{A_1(t)}{2t}=t^2(1-2t);\\
&\frac{(1-3t)A_3(t)}{2t^3}+\frac{(1-2t)A_2(t)}{2t^2}+\frac{A_1(t)}{2t}+A_0(t)=t^3(1-t);\\
&-\frac{(1-t)B_3(t)}{2t^3}-\frac{B_2(t)}{2t^2}-\frac{B_1(t)}{2t}=-t^4-\frac32t^3+\frac92t^2-\frac52t+\frac12;\\
&\frac{(1-3t)B_3(t)}{2t^3}+\frac{(1-2t)B_2(t)}{2t^2}+\frac{B_1(t)}{2t}+B_0(t)=2t^4-\frac{13}2t^3+\frac{15}2t^2-\frac72t+\frac12.
\end{aligned}
\end{equation}
From \eqref{nd3rd} and  \eqref{AsBs}, we have
\begin{equation}\label{nd4th}
\begin{aligned}
&n(t)=t^2(1-2t)R+t^3(1-t);\\
&d(t)=\left(-t^4-\frac32t^3+\frac92t^2-\frac52t+\frac12\right)R+2t^4-\frac{13}2t^3+\frac{15}2t^2-\frac72t+\frac12.
\end{aligned}
\end{equation}

Recall that $R=\sqrt{1-4t}$. In order to eliminate the square root in the denominator $d(t)$ in \eqref{nd4th}, we multiply the numerator and denominator by the denominator's conjugate,
$-\left(-t^4-\frac32t^3+\frac92t^2-\frac52t+\frac12\right)R+2t^4-\frac{13}2t^3+\frac{15}2t^2-\frac72t+\frac12$.
Calling the resulting numerator and denominator by $\bar n(t)$ and $\bar d(t)$, 
this yields
\begin{equation}\label{nd5th}
\begin{aligned}
&\bar n(t)=\left(-t^6-\frac92t^5+21t^4-\frac{57}2t^3+\frac{35}2t^2-5t+\frac12\right)\sqrt{1-4t}+\\
&6t^6+\frac{29}2t^5-58t^4+\frac{119}2t^3-\frac{55}2t^2+6t-\frac12;\\
&\bar d(t)=4t^7+15t^6-56t^5+45t^4+4t^3-19t^2+8t-1.
\end{aligned}
\end{equation}
The new denominator factors  as
\begin{equation}\label{dfactor}
\begin{aligned}
&\bar d(t)=4t^7+15t^6-56t^5+45t^4+4t^3-19t^2+8t-1=\\
&(1-4t)(1-t)^2\left(-t^4-6t^3+2t^2+2t-1\right).
\end{aligned}
\end{equation}
The two polynomials in the new numerator $\bar n(t)$ factor as
\begin{equation}\label{nfactor}
\begin{aligned}
&-t^6-\frac92t^5+21t^4-\frac{57}2t^3+\frac{35}2t^2-5t+\frac12=\\
&(1-t)^2\left(-t^4-\frac{13}2t^3+9t^2-4t+\frac12\right);\\
&6t^6+\frac{29}2t^5-58t^4+\frac{119}2t^3-\frac{55}2t^2+6t-\frac12=\\
&(1-4t)(1-t)\left(\frac32t^4+\frac{11}2t^3-8t^2+\frac72t-\frac12\right).
\end{aligned}
\end{equation}
From \eqref{nd5th}-\eqref{nfactor} and \eqref{Galone3}, we conclude that
\begin{equation}\label{Gfinal}
\begin{aligned}
&\mathcal{G}^{U^3D}(t)=\frac{-t^4-\frac{13}2t^3+9t^2-4t+\frac12}{-t^4-6t^3+2t^2+2t-1}\left(1-4t\right)^{-\frac12}+\\
&\frac{\frac32t^4+\frac{11}2t^3-8t^2+\frac72t-\frac12}{(1-t)\left(-t^4-6t^3+2t^2+2t-1\right)}.
\end{aligned}
\end{equation}

The smallest absolute value among the  roots of $-t^4-6t^3+2t^2+2t-1$
is larger than $\frac14$; thus, $\frac1{-t^4-6t^3+2t^2+2t-1}$ and $\frac1{(1-t)\left(-t^4-6t^3+2t^2+2t-1\right)}$ are analytic
in a ball centered at the origin of radius larger than $\frac14$. Thus, applying \eqref{transferpower} in the case $\alpha=\frac12$ with
$g(t)=\frac1{-t^4-6t^3+2t^2+2t-1}$ and with $g(t)=\frac1{(1-t)\left(-t^4-6t^3+2t^2+2t-1\right)}$,
it follows from \eqref{Gfinal} that the leading order asymptotic contribution to $[t^n]\mathcal{G}^{U^3D}(t)$ comes from the term
$\frac{-t^4-\frac{13}2t^3+9t^2-4t+\frac12}{-t^4-6t^3+2t^2+2t-1}\left(1-4t\right)^{-\frac12}$.
Since $\frac1{-t^4-6t^3+2t^2+2t-1}|_{t=\frac14}=-\frac{256}{121}$, we conclude from \eqref{transferpower} that
\begin{equation}\label{finalasympG}
\begin{aligned}
&[t^n]\mathcal{G}^{U^3D}(t)\sim\frac{256}{121}4^n\frac{n^{-\frac12}}{\sqrt\pi}\left(4^{-4}+\frac{13}2\cdot4^{-3}-9\cdot4^{-2}+4\cdot4^{-1}-\frac12\right)=\\
&\frac{256}{121}4^n\frac{n^{-\frac12}}{\sqrt\pi}\frac{11}{256}=\frac1{11}4^n\frac{n^{-\frac12}}{\sqrt\pi}.
\end{aligned}
\end{equation}
From \eqref{genfuncsABG} and \eqref{anbngn}, we have
$[t^n]\mathcal{G}^{U^3D}(t)=C_ng_n=C_nE_n^{\text{av}(132)}G^{U^3D}_n$.
As previously noted,
the Catalan numbers satisfy
$C_n\sim4^n\frac{n^{-\frac32}}{\sqrt\pi}$.
Using these facts with \eqref{finalasympG}, we conclude that
\begin{equation}\label{1-11}
E_n^{\text{av}(132)}G^{U^3D}_n\sim\frac1{11}n.
\end{equation}
Theorem \ref{thmUUUD} now follows from \eqref{1-11}, \eqref{BnLn3} and the fact that the leading order asymptotic behavior of
$E_n^{\text{av}(132)}B^{U^3D}_n$ and of $E_n^{\text{av}(132)}G^{U^3D}_n$ coincide.
\hfill$\square$

\end{document}